\newtheorem{theorem}{{\bf Theorem}}
\newtheorem{remark}{{\bf Remark}}
\newtheorem{definition}[theorem]{{\bf Definition}}
\newtheorem{corollary}[theorem]{{\bf Corollary}}
\newtheorem{proposition}[theorem]{{\bf Proposition}}
\newtheorem{lemma}[theorem]{{\bf Lemma}}
\newtheorem{example}{{\bf Example}}
\def\bfa{\bf a}
\def\bfx{\boldsymbol{x}}
\def\bfp{\boldsymbol{p}}
\def\bfq{\boldsymbol{q}}
\def\bfv{\bf v}
\def\CC{\mathbb{C}}
\def\RR{\mathbb{R}}
\def\CCC{\mathcal{C}}
\begin{document}

\begin{frontmatter}
\title{Recognizing projections of rational curves.}

\author[a]{Juan Gerardo Alc\'azar\fnref{proy}}
\ead{juange.alcazar@uah.es}
\author[a]{Carlos Hermoso}
\ead{carlos.hermoso@uah.es}

\address[a]{Departamento de F\'{\i}sica y Matem\'aticas, Universidad de Alcal\'a,
E-28871 Madrid, Spain}

\fntext[proy]{
Supported by the Spanish Ministerio de Econom\'{\i}a y Competitividad and by the European Regional Development Fund (ERDF), under the project  MTM2014-54141-P. Member of the Research Group {\sc asynacs} (Ref. {\sc ccee2011/r34}) }

\begin{abstract}
Given two rational, properly parametrized space curves ${\mathcal C}_1$ and ${\mathcal C}_2$, where $\CCC_2$ is contained in some plane $\Pi$, we provide an algorithm to check whether or not there exist perspective or parallel projections mapping $\CCC_1$ onto $\CCC_2$, i.e. to recognize $\CCC_2$ as the projection of $\CCC_1$. In the affirmative case, the algorithm provides the eye point(s) of the perspective transformation(s), or the direction(s) of the parallel projection(s). The problem is mainly discussed from a symbolic point of view, but an approximate algorithm is also included. 
\end{abstract}

\end{frontmatter}

\section{Introduction}\label{section-introduction}

In this paper, we address the following geometric problem: \emph{given} a rational space curve $\CCC_2$, lying on a plane $\Pi$, and another rational space curve $\CCC_1$, not necessarily planar, \emph{check} whether or not there exist perspective or parallel projections mapping $\CCC_1$ onto $\CCC_2$, and \emph{find} them in the affirmative case. 

Our problem can be translated into the context of Computer Vision. For this purpose, we recall \cite{HZ} the simplest camera model, known as the \emph{pinhole camera}. In this model, a camera is modeled as a pair $(\bfa,\Pi)$, where $\bfa$ is called the \emph{eye point} of the camera, and ${\bf \Pi}$ is the \emph{image plane}: then, given an object $\Omega\subset {\Bbb R}^3$, the photograph of $\Omega$ taken by the camera is the projection of $\Omega$ from the point $\bfa$ onto the plane ${\bf \Pi}$ (see Fig. \ref{fig0}). The eye point can be allowed to be at infinity, in which case we have a parallel projection from a certain direction. 

\begin{figure}
$$\begin{array}{c}
\includegraphics[scale=0.3]{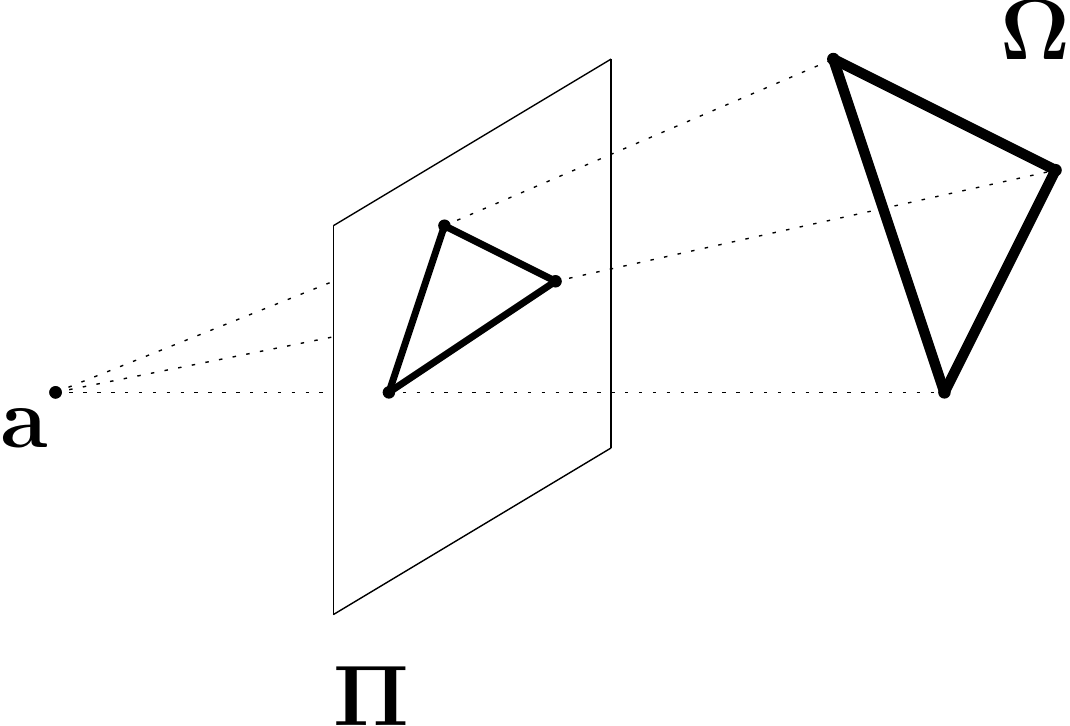}
\end{array}$$
\caption{Pinhole camera model.}\label{fig0}
\end{figure}

Therefore, in this context our problem can be translated as whether or not $\CCC_2$ can be regarded as a {\it photograph} of $\CCC_1$, taken with a camera where the image plane is known (it is the plane $\Pi$ containing $\CCC_2$), but where the eye point is unknown. 

 A more general problem is treated in \cite{Burdis}. In \cite{Burdis} the input is a pair of algebraic curves, ${\mathcal D}_1\subset {\Bbb R}^3$ and ${\mathcal D}_2\subset {\Bbb R}^2$, not necessarily rational,and the question is to check if there exists some camera where ${\mathcal D}_2$ is the photograph of ${\mathcal D}_1$: in other words, to find the positions of the eye point {\it and} the image plane, if any, such that ${\mathcal D}_2$ is the photograph of ${\mathcal D}_1$ taken from the camera eye point. This problem is known as the \emph{object-image correspondence} problem, and amounts to recognizing images without any clue on the parameters of the camera used to take the photograph. 

Since in our case we assume that the image plane is known, the problem here can be considered as a weak version of the problem in \cite{Burdis}. Because the problem is simpler, we can find a solution computationally simpler than that of \cite{Burdis}, too. In \cite{Burdis} the problem is solved by deciding whether the curve ${\mathcal D}_2$ is equivalent to some curve in a family of planar curves, computed from ${\mathcal D}_1$, under an action of the projective or the affine group. In turn, this is done by using differential invariants. Computationally, the question boils down to solving a quantifier elimination problem with five variables, in the case of perspective projections, and with four variables in the case of parallel projections. Since the differential invariants used in \cite{Burdis} are high-order (5 and 6 for affine actions, 7 and 8 for projective actions), the elimination problem can be hard.

In our case, we restrict to rational curves parametrized over ${\Bbb Q}$, and use a very different approach. We observe that any projection between $\CCC_1$ and $\CCC_2$ corresponds to a rational function $\psi$ between the parameter spaces; furthermore, $\psi$ is shown to be a \emph{M\"obius transformation} in the case of \emph{non-degenerate} projections between $\CCC_1$ and $\CCC_2$, i.e. projections which are injective for almost all points of $\CCC_2$. We show that the rational functions $\psi$ potentially corresponding to projections from eye points with rational coordinates can be efficiently computed by means of standard bivariate factoring techniques over the rationals. In order to also find the projections from non-rational eye points we need bivariate factoring over the reals, i.e. an absolute factorization. Furthermore, once $\psi$ is computed, checking if it gives rise to some projection between $\CCC_1$ and $\CCC_2$ is easy. 

As an alternative to computing an absolute factorization, we also provide another algorithm, both in symbolic and approximate versions, which takes advantage of the existence of $\psi$ without actually computing it. In general, the symbolic version of this last algorithm requires to compute the primitive element of an algebraic extension ${\Bbb Q}(\alpha,\beta)$, which can be costly. However, the approximate version of the algorithm, where algebraic numbers are numerically approximated, is fast. In this last case, we do not check if $\CCC_2$ is the projection of $\CCC_1$, but if $\CCC_2$ is ``approximately" the projection of $\CCC_1$. Furthermore, this approximate algorithm is well-suited for curves whose defining parametrizations are known only up to a certain precision, i.e. with floating point coefficients, which is closer to applications.  


\vspace{0.5 cm}

\noindent
{\bf Acknowledgements.} We thank Ron Goldman for some fruitful discussions on the problem. We also thank Sonia Rueda for her help with the estimation of Hausdorff distances, in Example \ref{exfin}.

\section{Projective and parallel projections.} \label{sec-prelim}

\noindent Throughout the paper, we consider two rational space curves $\CCC_1, \CCC_2 \subset \RR^3$, $\CCC_1\neq \CCC_2$. Such curves are algebraic, irreducible and can be parametrized by rational maps
\begin{equation}\label{eq:parametrizations}
{\bfx}_j: \RR \dashrightarrow \CCC_j\subset \RR^3, \qquad \bfx_j(t) = \big( x_j(t), y_j(t), z_j(t) \big),\qquad j = 1,2.
\end{equation}
We will suppose that $\bfx_1,\bfx_2$ have coefficients in ${\Bbb Q}$, and that $\CCC_2$ is contained in some plane $\Pi$; however, $\CCC_1$ is not necessarily planar. Also, we will exclude the case when $\CCC_1, \CCC_2$ are two planar curves contained in the same plane. The components $x_j, y_j, z_j$ of $\bfx_j$ are real, rational functions of $t$, therefore defined for all but a finite number of values of $t$. Nevertheless, at certain moments we will consider $x_j,y_j,z_j$ as functions from $\CC$ to $\CC$. 

For $j=1,2$, the parametrization $\bfx_j$ generates all the points of $\CCC_j$ except perhaps the point $P_j^{\infty}=\mbox{lim}_{t\to \infty}\bfx_j(t)$, which is affine whenever the degrees of the numerators of $x_j,y_j,z_j$ are less or equal than the degrees of the denominators. We will assume that the parametrizations in \eqref{eq:parametrizations} are \emph{proper}, i.e., birational or, equivalently, injective except for perhaps finitely many values of $t$. This can be assumed without loss of generality, since any rational curve can be properly reparametrized. For these claims and other results on properness, the interested reader can consult \cite{SWPD} for plane curves and \cite[\S 3.1]{A12} for space curves. 

\vspace{3 mm}

Now let us introduce projective and parallel projections. Let $\Pi$ be a plane. The \emph{parallel projection onto $\Pi$, in the direction of a nonzero vector $\bfv \in {\Bbb R}^3$}, is the transformation in 3-space that maps every (complex or real) point $\bfp$ onto the intersection $\bfq$ of $\Pi$ and the line through $\bfp$ parallel to $\bfv$. The \emph{perspective projection onto $\Pi$ from a point $\bfa$}, called the {\it eye point}, is the transformation in 3-space that maps every point $\bfp$ onto the intersection $\bfq$ of $\Pi$ and the line connecting $\bfa$ and $\bfp$. These definitions are illustrated by Fig \ref{fig1}. 

\begin{figure}
$$\begin{array}{c}
\includegraphics[scale=0.3]{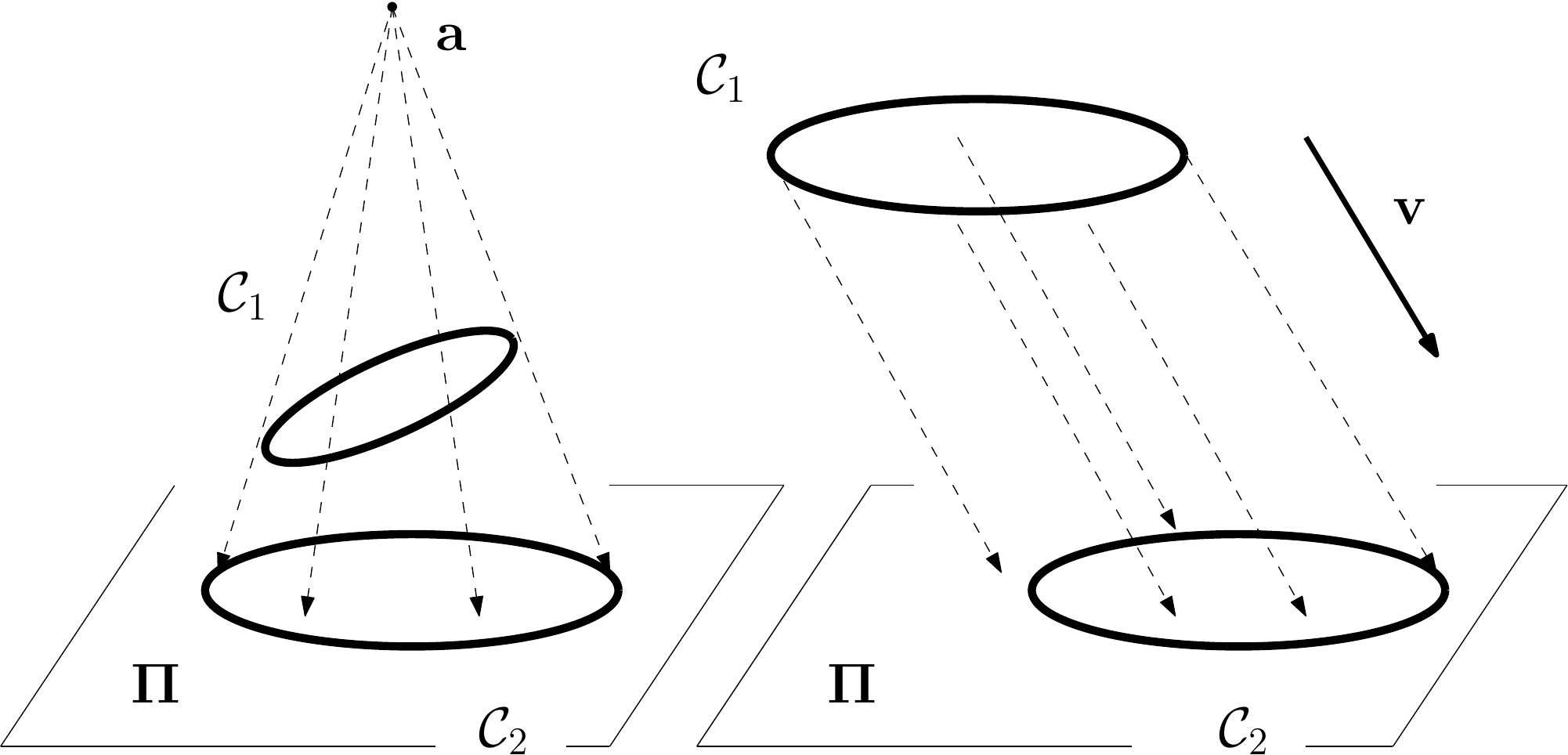}
\end{array}$$
\caption{Perspective projection (left), parallel projection (right).}\label{fig1}
\end{figure}

Parallel and perspective projections can be unified when we move to a projective setting. The (complex) {\it projective space}, ${\Bbb P}^3_{\Bbb C}$, is the set of 4-tuples $\tilde{\bfp}=[x:y:z:\omega]$, where $x,y,z,\omega\in {\Bbb C}$ and at least one them is nonzero, such that two such 4-tuples are considered equal when they are proportional to each other. If $\omega=0$ we have a {\it point at infinity}, while $\omega\neq 0$ corresponds to an {\it affine point}. In particular, if $\tilde{\bfp}$ is affine then it can be represented by the 4-tuple
\[\left[\frac{x}{\omega}:\frac{y}{\omega}:\frac{z}{\omega}:1\right].\]The \emph{projective closure} of ${\mathcal C}_j$ is the curve in ${\Bbb P}^3_{\Bbb C}$ whose affine part is ${\mathcal C}_j$; the projective closure can be parametrized as 
\[\tilde{\bfx}(t)=\left[\tilde{x}_j(t):\tilde{y}_j(t):\tilde{z}_j(t):\tilde{\omega}_j(t)\right],\]where $x_j(t)=\frac{\tilde{x}_j(t)}{\tilde{\omega}_j(t)}$, $y_j(t)=\frac{\tilde{y}_j(t)}{\tilde{\omega}_j(t)}$, $z_j(t)=\frac{\tilde{z}_j(t)}{\tilde{\omega}_j(t)}$. 
For simplicity, we will use the same notation for a curve and its projective closure; it will be clear from the context whether we are working with one or the other. 

In this context, parallel or perspective projections are treated in the same way: in the case of perspective projections the eye point is affine, and in the case of parallel projections, the eye point is at infinity. So both projections \cite[\S 13]{G} can be represented by a projective transformation
\[
\begin{array}{cc}
\begin{bmatrix} 
x'\\
y'\\
z'\\
\omega'
\end{bmatrix}= & 
\underbrace{\begin{bmatrix} 
p_{11} & p_{12} & p_{13} & p_{14} \\
p_{21} & p_{22} & p_{23} & p_{24} \\
p_{31} & p_{32} & p_{33} & p_{34} \\
p_{41} & p_{42} & p_{43} & p_{44}
\end{bmatrix}}_P\cdot 
\begin{bmatrix} 
x\\
y\\
z\\
\omega
\end{bmatrix}.
\end{array}
\]
If $\tilde{\bfa}:=[\tilde{a}_1:\tilde{a}_2:\tilde{a}_3:\tilde{a}_4]$ denotes the eye point of the projection, and the implicit equation of the projection plane $\Pi$ is $Ax+By+Cz+D=0$, an easy computation shows that the matrix $P$ is 

\begin{equation}\label{matP}
\hspace{-1.5 cm}P=\begin{bmatrix} 
-\tilde{a}_2B-\tilde{a}_3C-\tilde{a}_4D & \tilde{a}_1B & \tilde{a}_1C & \tilde{a}_1D\\
\tilde{a}_2A & -\tilde{a}_1 A-\tilde{a}_3C-\tilde{a}_4D & \tilde{a}_2C & \tilde{a}_2D\\
\tilde{a}_3A & \tilde{a}_3B & -\tilde{a}_1A-\tilde{a}_2B-\tilde{a}_4D & \tilde{a}_3 D\\
\tilde{a}_4 A & \tilde{a}_4 B & \tilde{a}_4 C & -\tilde{a}_1A-\tilde{a}_2B-\tilde{a}_3C
\end{bmatrix}.
\hspace{10 cm}
\end{equation}
In particular, not every $4\times 4$ matrix represents a projection. Any matrix $P$ representing a projection satisfies that $\mbox{rank}(P)=3$; therefore the dimension of $\mbox{Ker}(P)$ is 1, and any basis of $\mbox{Ker}(P)$ provides projective coordinates for the eye point of the transformation. Furthermore, the affine part of the image $\mbox{Im}(P)$ defines the projection plane $\Pi$. If we denote the matrix consisting of the first three columns of $P$ by $M$, perspective projections satisfy that $\mbox{rank}(M)=3$, while parallel projections satisfy that $\mbox{rank}(M)<3$. We will represent the projection onto a plane $\Pi$ from an eye point $\tilde{\bfa}$ as ${\mathcal P}_{\tilde{\bfa}}$, so as to make the eye point explicit. Furthermore, when moving to the affine space, parallel projections correspond to \emph{affine} 3-D transformations, while perspective projections correspond to \emph{rational} 3-D transformations, where the numerator and denominator of each component has degree one. 

In the paper we will work with \emph{real} projections, so that the eye point, and therefore the matrix $P$, are real. Additionally, given a projection ${\mathcal P}_{\tilde{\bfa}}$ and a curve ${\mathcal C}$, the \emph{projection of} ${\mathcal C}$, ${\mathcal P}_{\tilde{\bfa}}({\mathcal C})$, is the image of ${\mathcal C}$ under ${\mathcal P}_{\tilde{\bfa}}$.

\begin{proposition}\label{curvesproj}
Let $P$ be a $4\times 4$ matrix defining a projection ${\mathcal P}_{\tilde{\bfa}}$, and let ${\mathcal C}$ be a projective rational curve properly parametrized by $\tilde{\bfx}(t)$, with at least one affine point. If ${\mathcal C}$ is not a line going through $\tilde{\bfa}$, then ${\mathcal P}_{\tilde{\bfa}}({\mathcal C})$ is a rational curve. 
\end{proposition}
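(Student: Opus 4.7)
\emph{Proof plan.} The natural candidate for a parametrization of $\mathcal{P}_{\tilde{\bfa}}(\mathcal{C})$ is the composition $P\cdot \tilde{\bfx}(t)$, whose four components are rational functions of $t$. The plan is to verify two things: first, that this formula really defines a rational map $\mathbb{P}^1\dashrightarrow \mathbb{P}^3$; and second, that its image is a curve rather than a single point. Rationality of $\mathcal{P}_{\tilde{\bfa}}(\mathcal{C})$ then follows at once, since the image already carries a rational parametrization.

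For the first point, I recall from the discussion leading to \eqref{matP} that $\mathrm{rank}(P)=3$ and $\mathrm{Ker}(P)=\mathrm{span}(\tilde{\bfa})$. Thus $P\cdot \tilde{\bfx}(t)$ vanishes exactly for those values of $t$ with $\tilde{\bfx}(t)\propto \tilde{\bfa}$, and these can be only finitely many; otherwise $\tilde{\bfx}$ would constantly represent the single projective point $\tilde{\bfa}$, contradicting the fact that $\mathcal{C}$ is a curve with at least one affine point. Hence $P\cdot \tilde{\bfx}(t)$ is a well-defined rational map into $\mathbb{P}^3$.

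The main step is to rule out a constant image. Suppose, for contradiction, that the image reduces to a single projective point $\tilde{\bfq}$, and fix any $\tilde{\bfq}'\in\mathbb{C}^4$ with $P\cdot \tilde{\bfq}'=\tilde{\bfq}$. Then for every $t$ in the domain there is a scalar $\lambda(t)$ with $P\cdot \tilde{\bfx}(t)=\lambda(t)\,\tilde{\bfq}$, and consequently $\tilde{\bfx}(t)-\lambda(t)\,\tilde{\bfq}'\in \mathrm{Ker}(P)=\mathrm{span}(\tilde{\bfa})$. Thus, for every $t$, $\tilde{\bfx}(t)$ lies in the two-dimensional linear subspace spanned by $\tilde{\bfa}$ and $\tilde{\bfq}'$, which projectivizes to a line $L$ through $\tilde{\bfa}$. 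Irreducibility of $\mathcal{C}$ then forces $\mathcal{C}=L$, contradicting the hypothesis that $\mathcal{C}$ is not a line through the eye point.

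With the constant-image case discarded, the image of $P\cdot \tilde{\bfx}$ is an irreducible projective curve that admits a rational parametrization, so $\mathcal{P}_{\tilde{\bfa}}(\mathcal{C})$ is rational. I expect the only delicate moment to be the fiber analysis in the previous paragraph; once one observes that every fiber of $\mathcal{P}_{\tilde{\bfa}}$ is a projective line through $\tilde{\bfa}$, this reduces to one line of linear algebra, so there should be no serious technical obstacle.
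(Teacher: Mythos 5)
Your proof is correct, but it takes a genuinely different route from the paper's. The paper argues as follows: if the image is not a curve, each row of $P\cdot\tilde{\bfx}(t)=\lambda(t)\alpha$ yields, after pairwise elimination of $\lambda(t)$, linear relations expressing that ${\mathcal C}$ lies in certain planes; since $\mathrm{rank}(P)=3$ at least two of these planes are distinct, so ${\mathcal C}$ is a line, and then (using the affine-point hypothesis to write ${\mathcal C}$ as $\tilde{\bf x}_0+t{\bf v}$) a case split on whether $\lambda(t)$ is constant shows that either ${\bf v}\in\mathrm{Ker}(P)$ or some combination $a\tilde{\bf x}_0+b{\bf v}\in\mathrm{Ker}(P)$, i.e.\ the line passes through the eye point. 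You instead exploit the kernel structure directly: from $P\cdot\tilde{\bfx}(t)=\lambda(t)\tilde{\bfq}$ and a preimage $\tilde{\bfq}'$ you get $\tilde{\bfx}(t)-\lambda(t)\tilde{\bfq}'\in\mathrm{Ker}(P)=\mathrm{span}(\tilde{\bfa})$, so ${\mathcal C}$ sits inside the projective line spanned by $\tilde{\bfa}$ and $\tilde{\bfq}'$ in one stroke. This is shorter, avoids the plane-elimination step (which in the paper requires some unstated care when certain $\alpha_i$ vanish) and the constant/non-constant dichotomy for $\lambda$, and it does not actually use the affine-point hypothesis, so it is marginally more general; its only prerequisites are the facts, already established in Section~2, that $\mathrm{rank}(P)=3$ and that $\mathrm{Ker}(P)$ is spanned by the coordinate vector of $\tilde{\bfa}$. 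The one point worth making explicit is that $\tilde{\bfq}'\notin\mathrm{Ker}(P)$ (else $\tilde{\bfq}=0$), so the span of $\tilde{\bfa}$ and $\tilde{\bfq}'$ is genuinely two-dimensional and projectivizes to a line; with that remark added, your argument is complete.
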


\begin{proof}
Let $\tilde{\bfx}(t)=[\tilde{x}(t):\tilde{y}(t):\tilde{z}(t):\tilde{\omega}(t)]$. If ${\mathcal P}_{\tilde{\bfa}}({\mathcal C})$ is a curve then $P\cdot \tilde{\bfx}(t)$ parametrizes ${\mathcal P}_{\tilde{\bfa}}({\mathcal C})$, so ${\mathcal P}_{\tilde{\bfa}}({\mathcal C})$ is clearly rational. So suppose that ${\mathcal P}_{\tilde{\bfa}}({\mathcal C})$ is not a curve. Then there exists a polynomial $\lambda(t)$, and $\alpha_i\in {\Bbb C}$, $i=1,\ldots,4$, such that 
\[P\cdot \begin{bmatrix}\tilde{x}(t)\\ \tilde{y}(t)\\ \tilde{z}(t)\\ \tilde{\omega}(t)\end{bmatrix}=\lambda(t)\cdot \begin{bmatrix} \alpha_1\\ \alpha_2 \\ \alpha_3 \\ \alpha_4\end{bmatrix}.\]
In this situation, for $i=1,\ldots,4$ we have that 
\begin{equation}\label{equ}
p_{i1}\cdot \tilde{x}(t)+p_{i2}\cdot \tilde{y}(t)+p_{i3}\cdot \tilde{z}(t)+p_{i4}\cdot \tilde{\omega}(t)=\lambda(t)\cdot \alpha_i.
\end{equation}
Therefore, there exist $\mu_1,\mu_2,\mu_3\in{\Bbb C}$ such that 
\begin{equation}\label{lasec}
\hspace{-1 cm}\begin{array}{ccc}
(p_{11}-\mu_1 \cdot p_{21})\tilde{x}(t)+(p_{12}-\mu_1 \cdot p_{22})\tilde{y}(t)+(p_{13}-\mu_1 \cdot p_{23})\tilde{z}(t)+(p_{14}-\mu_1 \cdot p_{24})\tilde{\omega}(t) & = & 0\\
(p_{11}-\mu_2 \cdot p_{31})\tilde{x}(t)+(p_{12}-\mu_2 \cdot p_{32})\tilde{y}(t)+(p_{13}-\mu_2 \cdot p_{33})\tilde{z}(t)+(p_{14}-\mu_2 \cdot p_{34})\tilde{\omega}(t) & = & 0\\
(p_{11}-\mu_3 \cdot p_{41})\tilde{x}(t)+(p_{12}-\mu_3 \cdot p_{42})\tilde{y}(t)+(p_{13}-\mu_3 \cdot p_{43})\tilde{z}(t)+(p_{14}-\mu_3 \cdot p_{44})\tilde{\omega}(t) & = & 0.
\end{array}
\end{equation}
Notice that each of the above equations expresses that ${\mathcal C}$ is contained in a certain plane $\Pi_k$, $k=1,2,3$. Since $\mbox{rank}(P)=3$, at least two of these planes are different, so ${\mathcal C}$ is contained in the intersection of two different planes, and therefore ${\mathcal C}$ is a straight line. Furthermore, since by hypothesis ${\mathcal C}$ has at least one affine point, ${\mathcal C}$ must be an affine straight line. Hence, let $\tilde{\bf x}_0+t\cdot {\bf v}$, where $\tilde{\bf x}_0=[x_0:y_0:z_0:1]$, ${\bf v}=[v_1:v_2:v_3:0]$ be a parametrization of ${\mathcal C}$. Then $P\cdot \tilde{\bfx}(t)=P\cdot \tilde{\bf x}_0+tP\cdot {\bf v}$. Now recall that $P\cdot \tilde{\bfx}(t)=\lambda(t)\cdot {\bf \alpha}$, where ${\bf \alpha}=[\alpha_1:\alpha_2:\alpha_3:\alpha_4]$. If $\lambda(t)$ is a constant, then $P\cdot {\bf v}=0$, so ${\bf v}$ is the eye point of the projection defined by $P$. Therefore, ${\mathcal C}$ goes through the projective point defined by ${\bf v}$. However, by hypothesis this cannot happen. So $\lambda(t)$ cannot be a constant, in which case $P\cdot \tilde{\bf x}_0$ and $P\cdot {\bf v}$ are proportional, because each one must be proportional to $\alpha$. Therefore there exist $a,b\in {\Bbb C}$ such that $a{\bf x}_0+b{\bf v}\in \mbox{Ker}(P)$, which implies that ${\mathcal C}$ contains the eye point of the projection. Again, by hypothesis this cannot happen. We deduce that ${\mathcal P}_{\tilde{\bfa}}({\mathcal C})$ must be a curve, so the proposition is proven. 
\end{proof}

\section{Statement of the problem.}\label{sec-state}

In order to formally state the problem we want to solve, we first need the following definition.

\begin{definition}\label{defpro} Let $\tilde{\bfa}\in {\Bbb P}^3_{\Bbb R}$, let $\CCC_1$, $\CCC_2$ be two space rational curves, where $\CCC_2$ is contained in a plane $\Pi$, and let ${\mathcal P}_{\tilde{\bfa}}$ be the projection from the eye point $\tilde{\bfa}$ onto the plane $\Pi$. We say that $\CCC_2$ is the projection of $\CCC_1$ from $\tilde{\bfa}$, i.e. $\CCC_2={\mathcal P}_{\tilde{\bfa}}(\CCC_1)$, if every (projective) point of $\CCC_2$ is the projection of some (projective) point of $\CCC_1$. Furthermore, we will say that the projection is \emph{non-degenerate} if ${\mathcal P}_{\tilde{\bfa}}\vert_{{\mathcal C}_1}^{-1}$ is injective for almost all points of $\CCC_2$; otherwise we will say that the projection is \emph{degenerate}.
\end{definition} 

Observe that Definition \ref{defpro} is consistent. Indeed, because of Proposition \ref{curvesproj} and since $\CCC_1$, $\CCC_2$ are both irreducible, the projection ${\mathcal P}_{\tilde{\bfa}}(\CCC_1)$ of $\CCC_1$ either has finitely many points in common with $\CCC_2$ (in which case certainly $\CCC_2$ is not the projection of $\CCC_1$ from $\tilde{\bfa}$), or completely coincides with $\CCC_2$. However, this requires working over the complex projective space: for instance, according to Definition \ref{defpro} the parabola $\{y=x^2,z=0\}$ is the projection of the space curve parametrized by $(t,t^2,1/t)$ from the point $[0:0:1:0]$, but the origin is the image of a point at infinity. Also, $\{y=x^2,z=0\}$ is the (degenerate) projection of the space curve $(t^2,t^4,t)$ from the point $[0:0:1:0]$; however, all the affine points $(a,a^2,0)$ of the parabola with $a<0$ come from complex, affine points $\left(-|a|,|a|^2,i\sqrt{|a|}\right)$ of the space curve. 

On the other hand, the notion of degeneracy arising in Definition \ref{defpro} implies the existence of either two different (possibly complex) branches of $\CCC_1$ projecting onto a same branch of $\CCC_2$, or some branch of $\CCC_1$ collapsing onto a point of $\CCC_2$ under projection. The notion of degeneracy is, in general, also defined over the complex numbers. It can even happen that degeneracy occurs only over the complex, but not the real numbers. For instance, let $\CCC_1$ be the curve parametrized by $\bfx_1(t)=(t^3,t^6,t)$, and let $\CCC_2$ be the parabola parametrized by $\bfx_2(s)=(s,s^2,0)$. It is clear that for every real point $\bfp=(a,a^2,0)\in \CCC_2$, with $a\in {\Bbb R}$, there is just one real point of $\CCC_1$ projecting onto $\bfp$, namely the point $(a,a^2,\sqrt[3]{a})$. However, since $\sqrt[3]{a}$ has three different complex values for $a\neq 0$, there are three complex points of $\CCC_1$ projecting onto $\bfp$, namely the points $(a,a^2,\sqrt[3]{a}\cdot \xi)$, where $\xi^3=1$. So every branch of $\CCC_2$ comes from three different complex branches of $\CCC_1$.

Now we can state the problem we want to solve. Although in the paper we will mostly work in the affine space, the language of projective space is useful to state the problem in a clearer and more general way.

\begin{itemize}
\item {\bf Projection Problem:} given two affine, space real algebraic curves $\CCC_1$, $\CCC_2$, where $\CCC_2$ is contained in a plane $\Pi$, properly parametrized by $\bfx_1(t)$, $\bfx_2(s)$, {\bf check} if there exists $\tilde{\bfa}\in {\Bbb P}^3_{\Bbb R}$ (and find it in the affirmative case) such that $\CCC_2$ is the projection of $\CCC_1$ from $\tilde{\bfa}$.
\end{itemize}

Observe that once $\tilde{\bfa}$ is computed we can find out the nature of the projection: if $\tilde{\bfa}$ is a point at infinity we have a parallel projection, and if $\tilde{\bfa}$ is affine we have a perspective projection. Additionally, if either $\CCC_1$ or $\CCC_2$ is a straight line, then $\CCC_2={\mathcal P}_{\tilde{\bfa}}(\CCC_1)$ implies that both  $\CCC_1$ and $\CCC_2$ are contained in a same plane, which is a case we excluded at the beginning of the section; so in the rest of the paper, we will assume that $\CCC_1$, $\CCC_2$ are not straight lines.

The parallel or perspective projection mapping $\CCC_1$ onto $\CCC_2$ is not necessarily unique. For instance, let $\CCC_1$, $\CCC_2$ be the two irreducible components of the space curve obtained by intersecting the cylinders $x^2+y^2=1$, $x^2+z^2=1$ (see Figure \ref{fig:cylinders}).  One can check that the projections parallel to the $y$-axis and the $z$-axis both transform $\CCC_1$ into $\CCC_2$. Similarly, let $\CCC_1$ be $\{x^2+y^2=1,z=1\}$ and let $\CCC_2$ be $\{x^2+y^2=2,z=0\}$ (see Fig. \ref{fig:unique}). These curves are two circles of radii 1 and $\sqrt{2}$, located in the planes $z=1$ and $z=0$, with centers on the $z$-axis. In this case there are two different perspective projections transforming $\CCC_1$ into $\CCC_2$, one from the point $\left(0,0,\frac{\sqrt{2}}{\sqrt{2}-1}\right)$, and another one from the point $\left(0,0,\frac{\sqrt{2}}{\sqrt{2}+1}\right)$. This example also shows that two rational curves $\CCC_1$ and $\CCC_2$ parametrized over ${\Bbb Q}$ can however be related by a projection from a point with non-rational coordinates. 

\begin{figure}
\begin{center}
\includegraphics[scale=0.4]{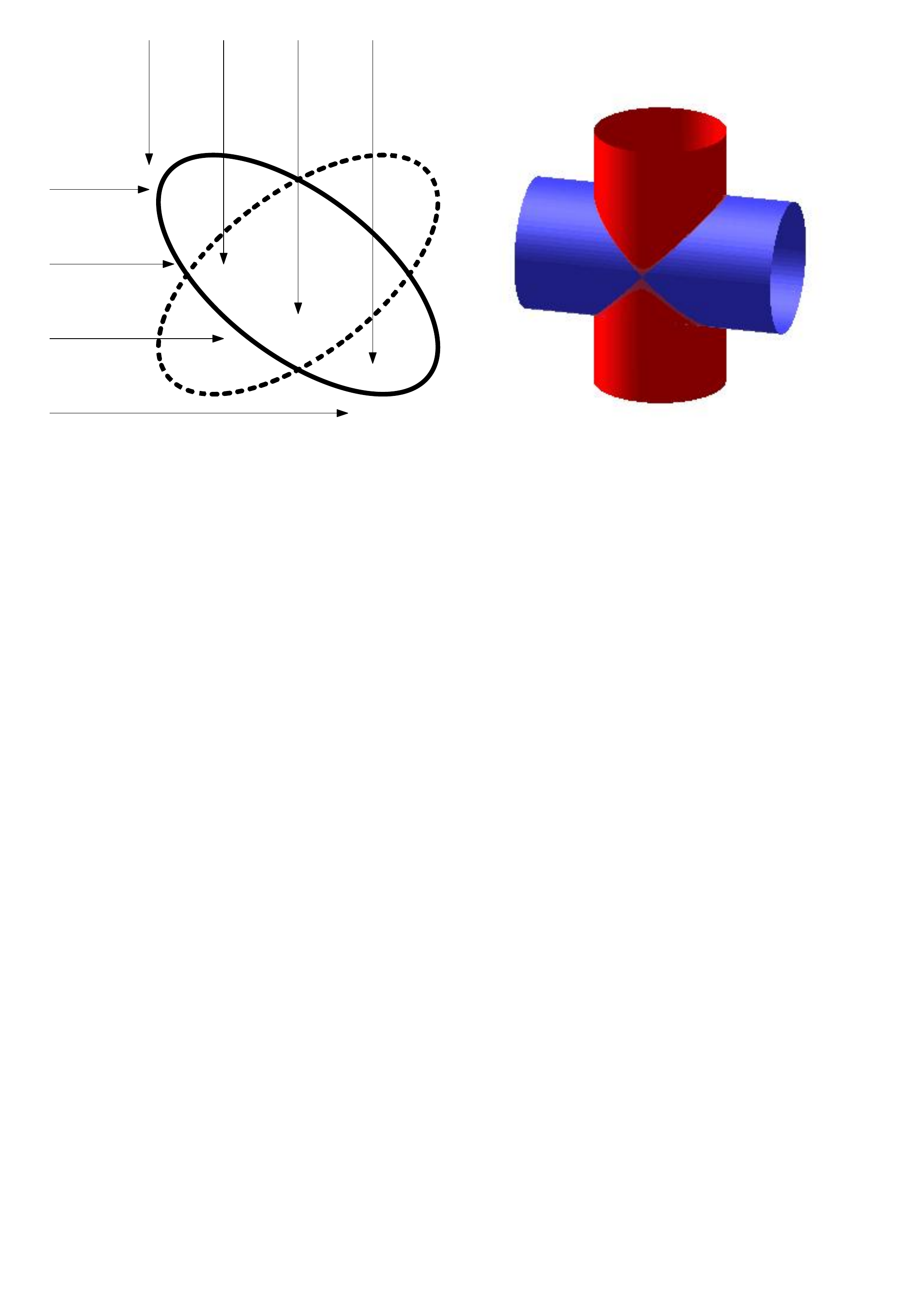} 
\end{center}
\caption{Right: intersection of the cylinders $x^2+y^2=1$, $x^2+z^2=1$. Left: the two irreducible curves we get when intersecting $x^2+y^2=1$, $x^2+z^2=1$ can be projected onto each other in two different directions. }\label{fig:cylinders}
\end{figure}

\begin{figure}
\begin{center}
\includegraphics[scale=0.4]{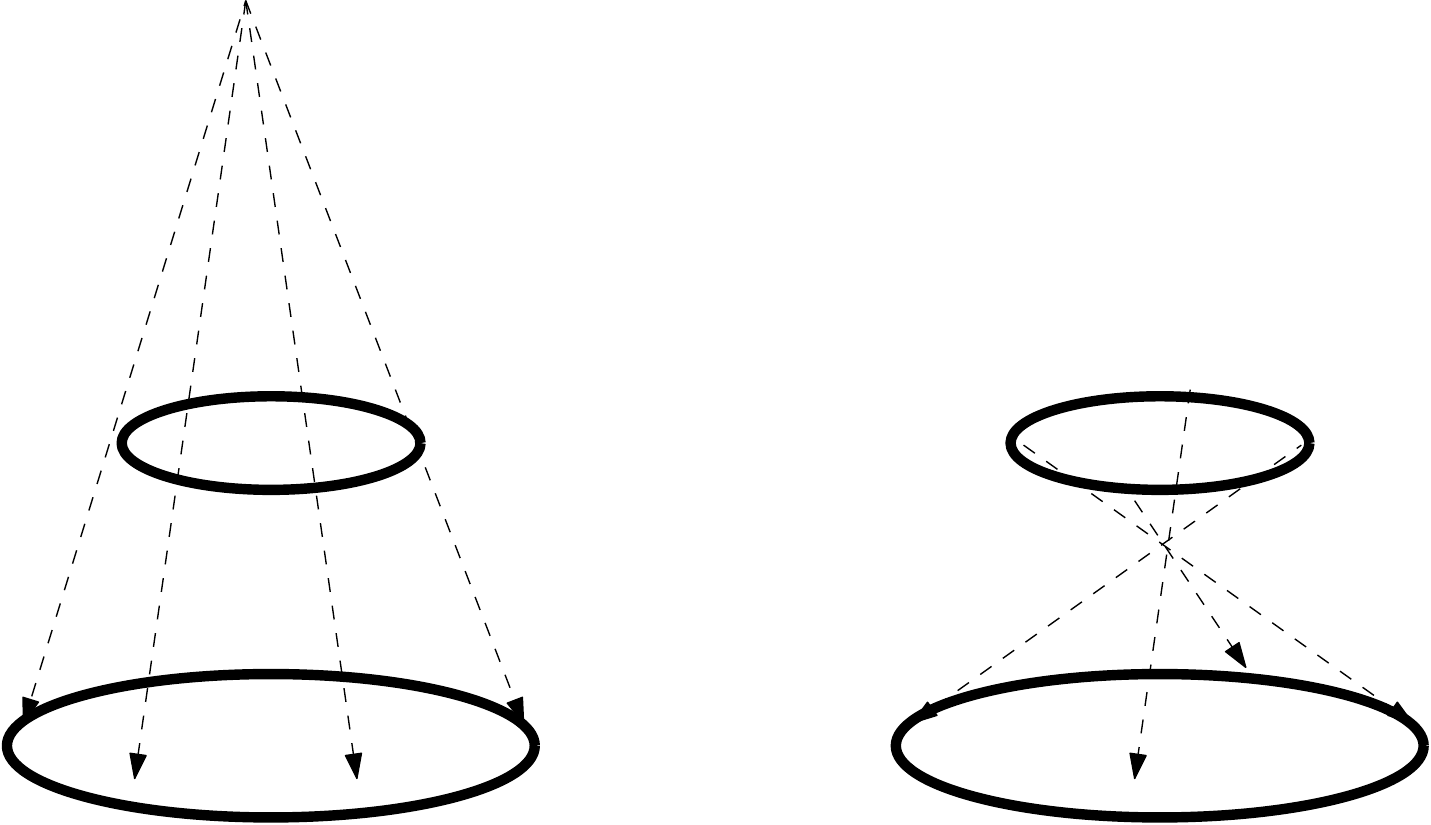} 
\end{center}
\caption{The circles $\{x^2+y^2=1,z=1\}$ and  $\{x^2+y^2=2,z=0\}$ are related by two different perspective projections. }\label{fig:unique}
\end{figure}

Notice that there can be a parallel projection and a perspective projection simultaneously transforming $\CCC_1$ into $\CCC_2$, too. For instance, let $\CCC_1$ be $\{x^2+y^2=1,z=1\}$ and let $\CCC_2$ be $\{x^2+y^2=1,z=0\}$. It is clear that $\CCC_2$ is the projection of $\CCC_1$ parallel to the direction of the $z$-axis. But $\CCC_2$ is also the perspective projection of $\CCC_1$ from the point $(0,0,1/2)$.

\section{A factoring-based strategy.}\label{sec-general}

Suppose that $\CCC_2={\mathcal P}_{\tilde{\bfa}}({\mathcal C}_1)$, where $\tilde{\bfa}=[\tilde{a}_1:\tilde{a}_2:\tilde{a}_3:\tilde{a}_4]$, $\tilde{a}_4\neq 0$. Hence, for almost every affine point $\bfx_2(s)\in \CCC_2$ there exists another affine point $\bfx_1(t)\in \CCC_1$, and a scalar $\widehat{\lambda}=\widehat{\lambda}(t,s)$, such that the vector connecting the points $\bfx_1(t)$ and $\bfx_2(s)$ is parallel to the vector connecting the point $\bfx_1(t)$ and the eye point \[{\bf a}=\left(\frac{\tilde{a}_1}{\tilde{a}_4},\frac{\tilde{a}_2}{\tilde{a}_4},\frac{\tilde{a}_3}{\tilde{a}_4}\right),\]with $\widehat{\lambda}$ being the proportionality constant between these two vectors. So for almost all $s$ there exist $t,\widehat{\lambda}$ such that 
\begin{equation}\label{conic1}
\left\{
\begin{array}{ccc}
x_2(s)-x_1(t)&=&\widehat{\lambda}\cdot \left(x_1(t)-\frac{\tilde{a}_1}{\tilde{a}_4}\right)\\
y_2(s)-y_1(t)&=&\widehat{\lambda}\cdot \left(y_1(t)-\frac{\tilde{a}_2}{\tilde{a}_4}\right)\\
z_2(s)-z_1(t)&=&\widehat{\lambda}\cdot \left(z_1(t)-\frac{\tilde{a}_3}{\tilde{a}_4}\right).
\end{array}
\right.
\end{equation}
We will say that two $t,s$ satisfying Eq.\eqref{conic1} for some $\widehat{\lambda}$, are {\it related} by the projection ${\mathcal P}_{\tilde{\bfa}}$; i.e. $t,s$ are related by ${\mathcal P}_{\tilde{\bfa}}$ iff $\bfx_2(s)={\mathcal P}_{\tilde{\bfa}}(\bfx_1(t))$.

Calling $\lambda=\frac{\widehat{\lambda}}{\tilde{a}_4}$, Eq. \eqref{conic1} gives rise to
\begin{equation}\label{conic2}
\left\{
\begin{array}{ccc}
x_2(s)-x_1(t)&=&\lambda\cdot \left(\tilde{a}_4\cdot x_1(t)-\tilde{a}_1\right)\\
y_2(s)-y_1(t)&=&\lambda\cdot \left(\tilde{a}_4\cdot y_1(t)-\tilde{a}_2\right)\\
z_2(s)-z_1(t)&=&\lambda\cdot \left(\tilde{a}_4\cdot z_1(t)-\tilde{a}_3\right) 
\end{array}
\right.
\end{equation}

Now notice that when $\CCC_2$ is the projection of $\CCC_1$ from a projective point $\tilde{\bfa}=[\tilde{a}_1:\tilde{a}_2:\tilde{a}_3:0]$ at infinity, Eq.\eqref{conic2} also holds. Indeed, in that case Eq.\eqref{conic2} expresses that the vector connecting $\bfx_1(t)$ and $\bfx_2(s)$, where $t,s$ are related, is a multiple of the vector ${\bf v}=(-\tilde{a}_1,-\tilde{a}_2,-\tilde{a}_3)$, which defines the projection direction; $\lambda=\lambda(t,s)$ is the proportionality constant between $\bfx_1(t)-\bfx_2(s)$ and $\bfv$.

Therefore, in order to solve the projection problem we need to study Eq. \eqref{conic2}: every solution with $\tilde{a}_4= 0$ corresponds to a parallel projection, and every solution with $\tilde{a}_4\neq 0$ corresponds to a perspective projection. 

The next theorem is crucial. We recall here the definition of \emph{degree} of a rational function: if $\psi(t)=\frac{p(t)}{q(t)}$, where $p(t),q(t)$ are polynomials, the degree of $\psi(t)$ is the maximum of the degrees of $p(t),q(t)$. Furthermore, we define the \emph{associated polynomial} of a given rational function $\psi(t)=\frac{p(t)}{q(t)}$ as $G(t,s)=p(t)-sq(t)$. Finally, we recall that \emph{M\"obius transformation} is a function $\varphi(t)=\frac{at + b}{ct + d}$, with $ad-bc\neq 0$; it is well-known that M\"obius transformations are the birational transformations of the complex line \cite{SWPD}. 

\begin{theorem} \label{essent}
If $\CCC_2={\mathcal P}_{\tilde{\bfa}}({\mathcal C}_1)$, then there exists a real rational function $\psi(t)$ such that ${\mathcal P}_{\tilde{\bfa}}\circ \bfx_1=\bfx_2\circ \psi$. Furthermore, if ${\mathcal P}_{\tilde{\bfa}}$ is non-degenerate, then $\psi(t)$ is a M\"obius transformation. 
\end{theorem}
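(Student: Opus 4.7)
The plan is to construct $\psi$ directly, exploiting the properness of $\bfx_2$. Since $\bfx_2$ is proper, i.e.\ birational onto $\CCC_2$, it admits a rational inverse $\bfx_2^{-1}:\CCC_2 \dashrightarrow \RR$, defined off a finite set of points of $\CCC_2$. Moreover, because by hypothesis $\CCC_1$ is not a line through $\tilde{\bfa}$ (recall we have excluded $\CCC_1$ from being a line at all), Proposition \ref{curvesproj} guarantees that $\mathcal{P}_{\tilde{\bfa}}\circ\bfx_1: \RR \dashrightarrow \RR^3$ is a non-constant rational map whose image lies in $\CCC_2$. Hence
\[
\psi(t) \; := \; \bfx_2^{-1}\!\left(\mathcal{P}_{\tilde{\bfa}}\!\left(\bfx_1(t)\right)\right)
\]
is well-defined as a rational function $\RR \dashrightarrow \RR$, and by construction it satisfies $\bfx_2 \circ \psi = \mathcal{P}_{\tilde{\bfa}} \circ \bfx_1$, settling the first claim. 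That $\psi$ has real coefficients follows because $\bfx_1$, $\bfx_2$, and the matrix $P$ defining $\mathcal{P}_{\tilde{\bfa}}$ are all defined over $\RR$, and the standard elimination procedure that produces $\bfx_2^{-1}$ from $\bfx_2$ preserves the ground field (see e.g.\ \cite{SWPD} or \cite[\S 3.1]{A12}).

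For the second assertion, assume $\mathcal{P}_{\tilde{\bfa}}$ is non-degenerate, so that $\mathcal{P}_{\tilde{\bfa}}|_{\CCC_1}:\CCC_1 \dashrightarrow \CCC_2$ is generically injective. Since $\CCC_1$ and $\CCC_2$ are irreducible curves, a generically injective dominant rational map between them is birational, so $\mathcal{P}_{\tilde{\bfa}}|_{\CCC_1}$ is birational. The expression $\psi = \bfx_2^{-1}\circ \mathcal{P}_{\tilde{\bfa}}|_{\CCC_1}\circ \bfx_1$ then exhibits $\psi$ as a composition of three birational maps, hence a birational self-map of $\mathbb{P}^1_{\CC}$. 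Writing $\psi(t)=p(t)/q(t)$ with $\gcd(p,q)=1$, the map $\psi$ is generically $d$-to-one with $d=\max(\deg p,\deg q)$, so birationality forces $d=1$, i.e.\ $\psi(t)=(at+b)/(ct+d)$ with $ad-bc\neq 0$. This is precisely a M\"obius transformation, as required.

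The main subtlety the argument rests on is the properness of $\bfx_2$: without it, $\bfx_2^{-1}$ would be multivalued and $\psi$ could not be extracted as a single rational function. The remaining ingredients are purely formal, namely the classification of degree-one rational self-maps of $\mathbb{P}^1$ and the fact that generic injectivity of a dominant rational map between irreducible curves is equivalent to birationality; neither step involves real computation, so I do not anticipate any substantial technical obstacle beyond being careful about the (finitely many) exceptional parameter values where $\bfx_1$, $\mathcal{P}_{\tilde{\bfa}}$, or $\bfx_2^{-1}$ may fail to be defined.
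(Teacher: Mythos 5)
Your proposal is correct and follows essentially the same route as the paper: define $\psi=\bfx_2^{-1}\circ{\mathcal P}_{\tilde{\bfa}}\circ\bfx_1$ using the properness of $\bfx_2$, observe that $\psi$ is rational as a composition of rational maps, and deduce the M\"obius property from birationality in the non-degenerate case. The only difference is that you make explicit a few steps the paper leaves implicit (that generic injectivity of a dominant map between irreducible curves gives birationality, and that a degree-one birational self-map of the line is a M\"obius transformation).
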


\begin{proof} Since by hypothesis $\bfx_2$ is proper, $\bfx_2^{-1}$ exists. Therefore, the function $\psi=\bfx_2^{-1}\circ {\mathcal P}_{\tilde{\bfa}} \circ \bfx_1$ (see Diagram \eqref{eq:fundamentaldiagram}) defines a mapping from ${\Bbb C}$ to ${\Bbb C}$ such that the $t$-value generating a point in $\CCC_1$ via $\bfx_1$, is mapped onto the $s$-value generating the point in $\CCC_2$ that is the projection of $\bfx_1(t)$. 
\begin{equation}\label{eq:fundamentaldiagram}
\xymatrix{
\CCC_1 \ar[r]^{{\mathcal P}_{\tilde{\bfa}}} & \CCC_2\ar@{-->}[d]^{\bfx_2^{-1}} \\
\CC \ar@{-->}[u]^{\bfx_1} \ar@{-->}[r]_{\psi} & \CC 
}
\end{equation}
Since $\bfx_2$ is rational then $\bfx_2^{-1}$ is also rational, and since $\bfx_1$ and ${\mathcal P}_{\tilde{\bfa}}$ are rational too, we deduce that $\psi$ is a rational function. Finally, if ${\mathcal P}_{\tilde{\bfa}}$ is non-degenerate then it has an inverse, and therefore $\psi$ also has an inverse, which must be rational. Therefore $\psi$ is a birational transformation of the complex line, and hence it is a M\"obius transformation. 
\end{proof}

We will say that the function $\psi(t)$ is {\it associated} with ${\mathcal P}_{\tilde{\bfa}}$. Additionally, if $G(t,s)$ is the polynomial associated with $\psi(t)$, we will also say that $G(t,s)$ is \emph{associated} with ${\mathcal P}_{\tilde{\bfa}}$. Observe that if $\CCC_2={\mathcal P}_{\tilde{\bfa}}(\CCC_1)$ then the projection of $\bfx_1(t)$ under ${\mathcal P}_{\tilde{\bfa}}$ is the point $\bfx_2(\psi(t))$ (see Figure \ref{fig:cor}). Furthermore, Theorem \ref{essent} provides the following corollary, which follows by substituting $s=\psi(t)$ into Eq. \eqref{conic2}, taking into account that $\bfx_2(s)$ and $\bfx_1(t)$ are rational parametrizations. 

\begin{figure}
\begin{center}
\includegraphics[scale=0.4]{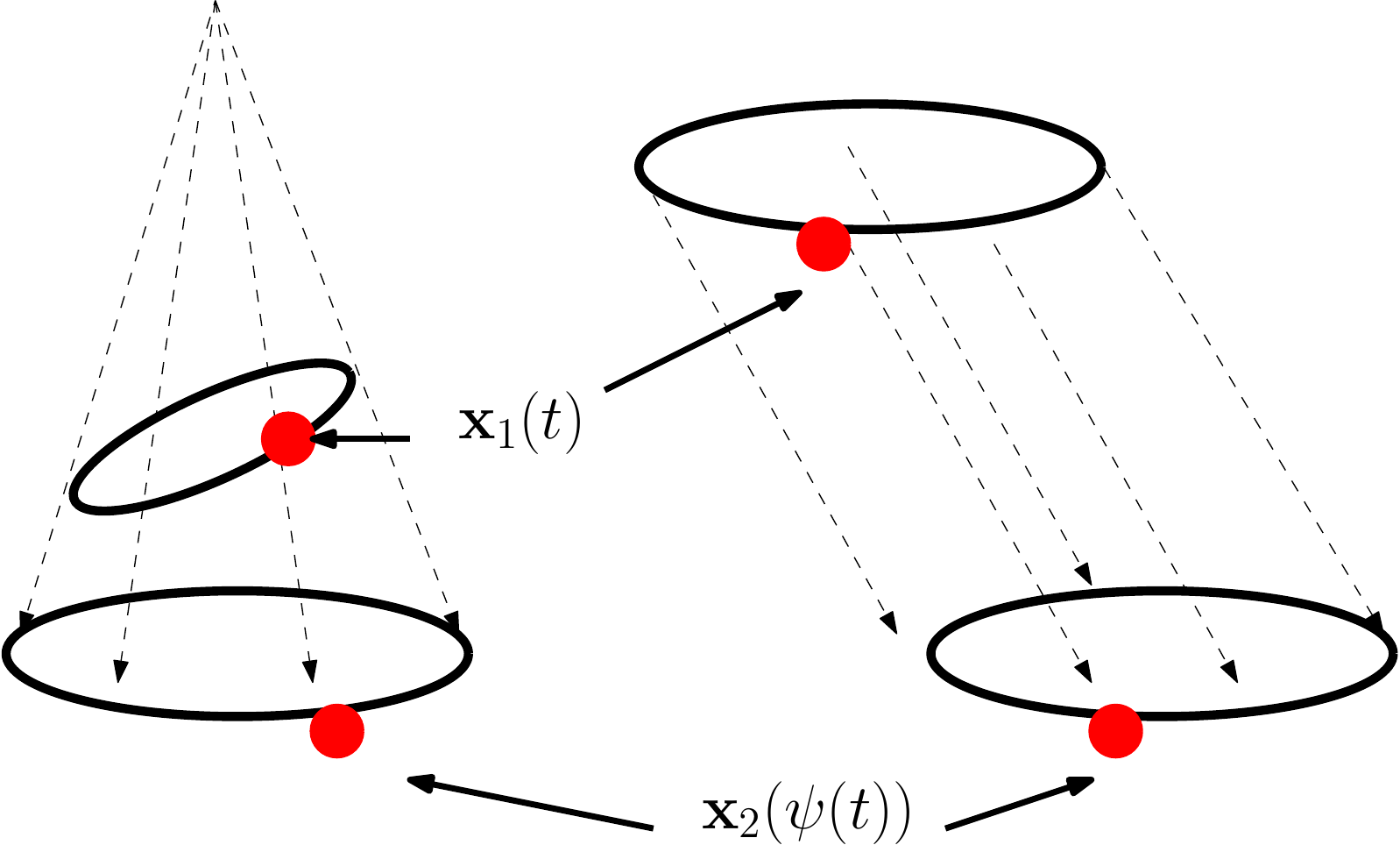} 
\end{center}
\caption{Illustrating Theorem \ref{essent}.}\label{fig:cor}
\end{figure}

\begin{corollary}\label{corfind}
Let $\CCC_2={\mathcal P}_{\tilde{\bfa}}(\CCC_1)$, and let $\psi(t)$ be associated with ${\mathcal P}_{\tilde{\bfa}}$. Then there exists a rational function $\lambda(t)$ such that for each $t$, the values $s=\psi(t)$, $\lambda=\lambda(t)$ satisfy Eq. \eqref{conic2}.
\end{corollary}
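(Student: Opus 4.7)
The plan is to string together Theorem~\ref{essent} with Eq.~\eqref{conic2}, which is the system whose derivation motivated the ``related'' relation between parameter values in the first place. By Theorem~\ref{essent} we have $\bfx_2(\psi(t))={\mathcal P}_{\tilde{\bfa}}(\bfx_1(t))$ for almost all $t$, which is exactly the condition that $t$ and $s=\psi(t)$ are related by ${\mathcal P}_{\tilde{\bfa}}$. Hence, by the very definition of ``related'' given just after Eq.~\eqref{conic1}, for each such $t$ there exists a scalar $\lambda$ making Eq.~\eqref{conic2} hold with $s=\psi(t)$. So the only substantive point is to check that this $\lambda$, which a priori is merely a function of $t$, is in fact a \emph{rational} function of $t$.

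For this I would simply solve Eq.~\eqref{conic2} for $\lambda$. Using the first component,
\[
\lambda(t)\;=\;\frac{x_2(\psi(t))-x_1(t)}{\tilde{a}_4\cdot x_1(t)-\tilde{a}_1},
\]
and since $\bfx_1$, $\bfx_2$ and $\psi$ are all rational, the right-hand side is a rational function of $t$. The same $\lambda(t)$ then satisfies the $y$- and $z$-equations in \eqref{conic2} because those equations are known to be jointly consistent at every such $(t,s)$ (that is what it means for $s=\psi(t)$ to be related to $t$).

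The one point that needs a brief justification is that the denominator $\tilde{a}_4 x_1(t)-\tilde{a}_1$ is not identically zero. If it were, then, in order to be able to solve for $\lambda$ at all, we would fall back on the $y$- or $z$-component; and at least one of the three expressions
\[
\tilde{a}_4 x_1(t)-\tilde{a}_1,\quad \tilde{a}_4 y_1(t)-\tilde{a}_2,\quad \tilde{a}_4 z_1(t)-\tilde{a}_3
\]
must be a nonzero rational function of $t$, since otherwise $\bfx_1(t)$ would be the constant eye point, contradicting that $\CCC_1$ is a curve. I expect this to be the only subtlety worth mentioning, and it is essentially notational; the corollary is really immediate from Theorem~\ref{essent} combined with the derivation of Eq.~\eqref{conic2}.
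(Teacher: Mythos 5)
Your argument is essentially the paper's own: the corollary is justified there in one line, by substituting $s=\psi(t)$ into Eq.~\eqref{conic2} and invoking the rationality of $\bfx_1$, $\bfx_2$ and $\psi$. Your additional care in solving for $\lambda$ from one component and checking that not all three denominators $\tilde{a}_4 x_1(t)-\tilde{a}_1$, $\tilde{a}_4 y_1(t)-\tilde{a}_2$, $\tilde{a}_4 z_1(t)-\tilde{a}_3$ can vanish identically (in the parallel case $\tilde{a}_4=0$ this holds because $\tilde{a}_1,\tilde{a}_2,\tilde{a}_3$ cannot all be zero, rather than because $\bfx_1$ would be constant) only makes the same argument more explicit.
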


\begin{remark}\label{distinct}
If there are several projections ${\mathcal P}_{\tilde{\bfa}_i}$ such that $\CCC_2={\mathcal P}_{\tilde{\bfa}_i}(\CCC_1)$, $i=1,\ldots,m$, we can see from the diagram \eqref{eq:fundamentaldiagram} that each one must correspond to a different $\psi_i(t)$. 
\end{remark}

In order to find the projection ${\mathcal P}_{\tilde{\bfa}}$, if any, we will focus on computing the associated function $\psi(t)$ described in Theorem \ref{essent}. More precisely, we will seek the polynomial associated with $\psi(t)$. Now let
\begin{equation}\label{eqn}
N(t,s)=\frac{\partial \bfx_1(t)}{\partial t}\times \frac{\partial \bfx_2(s)}{\partial s}.
\end{equation}
Then we have the following result.

\begin{proposition}\label{asoc}
If $t,s$ are related by the projection ${\mathcal P}_{\tilde{\bfa}}$, then 
\begin{equation}\label{condi}
N(t,s)\cdot (\bfx_1(t)-\bfx_2(s))=0,
\end{equation}
where $\cdot$ represents the usual Euclidean dot product. 
\end{proposition}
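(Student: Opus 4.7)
The plan is to combine Corollary \ref{corfind} with the chain rule, reducing \eqref{condi} to a straightforward orthogonality check. I fix a pair of related parameters $(t,s)$; Theorem \ref{essent} gives $s=\psi(t)$, and Corollary \ref{corfind} supplies a rational function $\lambda(t)$ making \eqref{conic2} an identity in $t$. Setting $\boldsymbol{u}(t):=\tilde{a}_4\bfx_1(t)-(\tilde{a}_1,\tilde{a}_2,\tilde{a}_3)$, the three scalar equations \eqref{conic2} collapse into the single vector identity
\[
\bfx_2(\psi(t))-\bfx_1(t)=\lambda(t)\,\boldsymbol{u}(t),
\]
and the key observation is that $\boldsymbol{u}'(t)=\tilde{a}_4\,\bfx_1'(t)$ is always a scalar multiple of $\bfx_1'(t)$ (identically zero in the parallel case $\tilde{a}_4=0$).

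Differentiating both sides of the above identity with respect to $t$ and rearranging yields
\[
\psi'(t)\,\bfx_2'(\psi(t))=\big(1+\lambda(t)\tilde{a}_4\big)\bfx_1'(t)+\lambda'(t)\,\boldsymbol{u}(t).
\]
Next, I would take the Euclidean dot product of both sides with $N(t,\psi(t))=\bfx_1'(t)\times\bfx_2'(\psi(t))$: the left-hand side vanishes because $N\perp\bfx_2'(\psi(t))$, and on the right the coefficient of $\bfx_1'(t)$ drops out for the same reason. What remains is
\[
\lambda'(t)\,\boldsymbol{u}(t)\cdot N(t,\psi(t))=0.
\]
Whenever $\lambda'(t)\neq 0$ this forces $\boldsymbol{u}(t)\cdot N(t,\psi(t))=0$, and since $\bfx_1(t)-\bfx_2(\psi(t))=-\lambda(t)\boldsymbol{u}(t)$, Eq. \eqref{condi} follows (the incidental subcase $\lambda(t)=0$ gives $\bfx_1(t)=\bfx_2(\psi(t))$ and \eqref{condi} holds trivially).

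The remaining obstacle is the degenerate regime $\lambda'(t)\equiv 0$, in which $\lambda$ is a constant $\lambda_0$. Here the same differentiation reduces to $\psi'(t)\bfx_2'(\psi(t))=(1+\lambda_0\tilde{a}_4)\bfx_1'(t)$, forcing $\bfx_2'(\psi(t))$ to be parallel to $\bfx_1'(t)$ at generic $t$, so $N(t,\psi(t))\equiv 0$ and \eqref{condi} is trivially satisfied. Because $\bfx_1,\bfx_2,\psi,\lambda$ are all rational, each side of \eqref{condi} is a rational function of $t$, so the identity extends by continuity from the Zariski-dense subset where the arguments above apply to all related pairs $(t,s)$. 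I expect the main subtlety to be careful bookkeeping of these low-dimensional exceptional loci and the uniform treatment of the perspective and parallel cases through the single closed-form expression for $\boldsymbol{u}(t)$.
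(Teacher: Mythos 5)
Your argument is correct, but it reaches \eqref{condi} by a genuinely different route from the paper's. The paper's proof is geometric: if $\CCC_2={\mathcal P}_{\tilde{\bfa}}(\CCC_1)$, both curves lie on a developable surface $S$ (a cone with vertex $\tilde{\bfa}$, or a cylinder when $\tilde{\bfa}$ is at infinity); related points lie on a common generatrix, and since the tangent plane of a developable surface at a point contains the generatrix through that point, the three vectors $\bfx_1(t)-\bfx_2(s)$, $\bfx_1'(t)$, $\bfx_2'(s)$ are coplanar, so the triple product in \eqref{condi} vanishes. You instead differentiate the vector form of Eq.~\eqref{conic2} furnished by Corollary~\ref{corfind} and kill two of the three resulting terms by orthogonality against $N$. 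Your route is purely computational and self-contained: it needs no appeal to the classical fact about tangent planes of developable surfaces (nor to the regularity of $S$ along the relevant generatrices), it treats the perspective and parallel cases uniformly through $\boldsymbol{u}(t)$, and it forces you to deal explicitly with the degenerate regime $\lambda'\equiv 0$, where you correctly observe that $N$ vanishes identically along the graph of $\psi$ --- a case the geometric proof absorbs silently. The price is that your argument literally establishes \eqref{condi} only for related pairs of the form $(t,\psi(t))$: your closing ``Zariski-density'' step propagates the identity along the graph of $\psi$, which is already Zariski-closed, so it does not reach the finitely many related pairs with $s\neq\psi(t)$ (these arise when $\bfx_2(\psi(t))$ is a multiple point of the parametrization $\bfx_2$). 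This is immaterial for how the proposition is used --- Corollary~\ref{cor1} only needs \eqref{condi} on the zero set of $G(t,s)$, i.e.\ on the graph of $\psi$ --- and the paper's own proof is no more careful at such points (the tangent plane of $S$ is ambiguous where two generatrices cross), so I would not count this against you.
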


\begin{proof} If $\CCC_2={\mathcal P}_{\tilde{\bfa}}(\CCC_1)$ then $\CCC_1$ and $\CCC_2$ are contained in a developable surface $S$, which is cylindrical when $\tilde{\bfa}$ is at infinity, and conical when $\tilde{\bfa}$ is not. Furthermore, if $t,s$ are related, then there is a generatrix of $S$ connecting $\bfx_1(t)$ and $\bfx_2(s)$. At 
each point $\bfp=\bfx_1(t)$ the vector $\frac{\partial \bfx_1(t)}{\partial t}$ belongs to the tangent plane to $S$ at $\bfp$. Similarly, at each point $\bfq=\bfx_2(s)$ the vector $\frac{\partial \bfx_2(s)}{\partial s}$ belongs to the tangent plane to $S$ at $\bfq$. Since the tangent plane at a point of a developable surface contains the generatrix through the point \cite[\S 2.4]{Struik}, if $t,s$ are related then $\bfx_1(t)-\bfx_2(s)$, $\frac{\partial \bfx_1(t)}{\partial t}$, $\frac{\partial \bfx_2(s)}{\partial s}$ belong to the same plane. Therefore, they are linearly dependent, and Condition \eqref{condi} follows.
\end{proof} 

Proposition \ref{asoc} provides the following corollary; we denote the square-free part of the numerator of the rational function at the left hand-side of \eqref{condi}, by ${\mathcal N}(t,s)$.

\begin{corollary}\label{cor1}
If $t,s$ are related under the projection ${\mathcal P}_{\tilde{\bfa}}$, and $G(t,s)$ is associated with ${\mathcal P}_{\tilde{\bfa}}$, then $G(t,s)$ divides ${\mathcal N}(t,s)$.
\end{corollary}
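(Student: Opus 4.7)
The plan is to combine Proposition~\ref{asoc} with Corollary~\ref{corfind} and then use an irreducibility argument on $G(t,s)$. By Corollary~\ref{corfind}, for almost every $t$ the pair $\bigl(t,\psi(t)\bigr)$ consists of values related by ${\mathcal P}_{\tilde{\bfa}}$. Hence, by Proposition~\ref{asoc}, the rational expression
\[
N(t,s)\cdot\bigl(\bfx_1(t)-\bfx_2(s)\bigr)
\]
vanishes whenever $s=\psi(t)$, i.e.\ whenever $G(t,s)=0$. Clearing denominators, the same vanishing statement holds for the numerator of this rational function, which I will call $\mathcal{M}(t,s)$. Thus $\mathcal{M}(t,s)$ is zero on every point of the affine variety $V(G)\subset {\Bbb C}^2$.

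The next step is to check that $G(t,s)$ is irreducible in ${\Bbb C}[t,s]$. Write $\psi(t)=p(t)/q(t)$ in lowest terms, so that $G(t,s)=p(t)-sq(t)$. Viewed as a polynomial in $s$, $G$ is linear with coefficients $-q(t)$ and $p(t)$; since $\gcd(p,q)=1$, any nontrivial factorization of $G$ would have to consist of factors independent of $s$, which is impossible given that the coefficient of $s$ is a nonzero polynomial. Therefore $G$ is irreducible.

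With $G$ irreducible and $\mathcal{M}$ vanishing on $V(G)$, Hilbert's Nullstellensatz (or equivalently, the fact that the principal ideal $(G)$ is prime and coincides with its own radical) yields that $G$ divides $\mathcal{M}$. Finally, since $\mathcal{N}(t,s)$ is the square-free part of $\mathcal{M}(t,s)$ and $G$ is itself irreducible (and in particular square-free), the divisibility $G\mid \mathcal{M}$ descends to $G\mid \mathcal{N}$, proving the corollary.

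The only delicate point I expect is the irreducibility of $G$, which depends on $\psi=p/q$ being in lowest terms; once this is granted, the rest is essentially the Nullstellensatz plus a remark about square-free parts. A minor side issue is to make sure that $N(t,s)$ itself is not identically zero on $V(G)$ for trivial reasons (e.g.\ that $\bfx_1$ and $\bfx_2$ are not pathologically aligned), but this is already built into the hypotheses of the preceding results and the non-degeneracy discussion.
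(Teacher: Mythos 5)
Your proof is correct and follows exactly the route the paper intends: the paper states Corollary~\ref{cor1} as an immediate consequence of Proposition~\ref{asoc} (combined with Corollary~\ref{corfind}), leaving implicit precisely the steps you spell out — that the numerator of \eqref{condi} vanishes on infinitely many points of the irreducible curve $G(t,s)=p(t)-sq(t)=0$, hence is divisible by $G$, and that divisibility passes to the square-free part. The only nitpick is the phrasing of the irreducibility step: the correct statement is that a nontrivial factorization of a polynomial of $s$-degree one must have \emph{one} factor depending only on $t$, which would then divide both $p$ and $q$, contradicting $\gcd(p,q)=1$.
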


Corollary \eqref{cor1} suggests a method for computing the factors $G(t,s)$ giving rise to projections: (a) compute the factors of ${\mathcal N}(t,s)$ (by means of factoring techniques); (b) pick the factors which are linear in $s$, and compute the corresponding functions $\psi(t)$; (c) find the projections by making use of Eq. \eqref{conic2}. Observe that this strategy requires that ${\mathcal N}(t,s)$ is not identically zero. 

\begin{lemma}\label{instru}
Under the preceding hypotheses, ${\mathcal N}(t,s)$ cannot be identically zero.
\end{lemma}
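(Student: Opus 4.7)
The plan is to argue by contradiction, turning the algebraic condition $\mathcal{N}(t,s)\equiv 0$ into a geometric statement about tangent lines. Note that
\[
F(t,s)\;:=\;N(t,s)\cdot\bigl(\bfx_1(t)-\bfx_2(s)\bigr)\;=\;\det\bigl(\bfx_1'(t),\,\bfx_2'(s),\,\bfx_1(t)-\bfx_2(s)\bigr),
\]
and $\mathcal{N}(t,s)\equiv 0$ is equivalent to $F(t,s)\equiv 0$. But the vanishing of this triple product is precisely the condition that the tangent line $\ell_1(t)$ to $\CCC_1$ at $\bfx_1(t)$ and the tangent line $\ell_2(s)$ to $\CCC_2$ at $\bfx_2(s)$ are coplanar, i.e.\ that they either intersect or are parallel. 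The strategy is to combine this with the fact that, since $\CCC_2\subset\Pi$, every line $\ell_2(s)$ lies entirely in $\Pi$, and then to extract a contradiction with the standing assumptions that $\CCC_1$ is not a line, $\CCC_2$ is not a line, and $\CCC_1,\CCC_2$ are not contained in a common plane.

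Concretely, fix a regular value $t_0$ and examine the position of $\ell_1(t_0)$ relative to $\Pi$. Three situations can occur: (i) $\ell_1(t_0)\subset\Pi$; (ii) $\ell_1(t_0)$ meets $\Pi$ transversally at a single affine point $q_0$; (iii) $\ell_1(t_0)$ is parallel to $\Pi$ but not contained in it. In case (ii), the direction of $\ell_1(t_0)$ is not parallel to $\Pi$, so $\ell_1(t_0)$ cannot be parallel to any $\ell_2(s)\subset\Pi$; the coplanarity assumption therefore forces $\ell_2(s)$ to meet $\ell_1(t_0)$, and the meeting point must be $q_0$. In case (iii), $\ell_1(t_0)$ does not meet $\Pi$ at all, so coplanarity forces $\ell_2(s)\parallel\ell_1(t_0)$ for every $s$; equivalently, every tangent of $\CCC_2$ passes through the point at infinity in the direction of $\bfx_1'(t_0)$.

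Hence in cases (ii) and (iii) every tangent line of $\CCC_2$ passes through one common point (either $q_0$ or a point at infinity). A short ODE argument finishes the job: writing $\bfw(s)=\bfx_2(s)-q_0$ in case (ii), or taking the direction of $\bfx_2'(s)$ in case (iii), the relation $\bfw'(s)\parallel\bfw(s)$ (respectively, constancy of the direction of $\bfx_2'(s)$) implies that $\bfw(s)$ keeps a fixed direction, so $\CCC_2$ is contained in a line, contradicting the standing assumption. Thus case (i) must hold at every regular $t_0$, which means $\bfx_1(t)\in\Pi$ on a cofinite subset of parameters, and then, because the linear equation of $\Pi$ vanishes on the parametrization $\bfx_1(t)$ as a polynomial identity, $\CCC_1\subset\Pi$---contradicting the hypothesis that $\CCC_1$ and $\CCC_2$ do not lie in a common plane. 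The main delicate step is the classical fact that a (plane) curve whose tangent lines all pass through a common point, possibly at infinity, must be a line; once this is established, the case analysis driven by the position of $\ell_1(t_0)$ relative to $\Pi$ is routine.
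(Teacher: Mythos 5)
Your proof is correct, but it follows a genuinely different route from the paper's. The paper also starts from the identical vanishing of $\det\left(\bfx_1(t)-\bfx_2(s),\bfx_1'(t),\bfx_2'(s)\right)$, but then freezes $s=s_0$ and manipulates the resulting determinant by column operations into a Wronskian-type identity $X(t)Y'(t)-X'(t)Y(t)=0$, concluding that $\CCC_1$ is planar; it then shows the plane of $\CCC_1$ must equal $\Pi$ by picking points with non-parallel tangents and invoking linear independence of $\{\vec{PQ},\vec{v}_P,\vec{v}_Q\}$. You instead read the vanishing determinant as coplanarity of the tangent lines $\ell_1(t)$ and $\ell_2(s)$, exploit the fact that every $\ell_2(s)$ lies in $\Pi$, and run a trichotomy on the position of a fixed $\ell_1(t_0)$ relative to $\Pi$: the two non-contained cases force all tangents of $\CCC_2$ through a single (possibly infinite) point, whence $\CCC_2$ is a line by the concurrent-tangents argument (valid in characteristic zero via the ODE you sketch), contradicting the standing assumption; the remaining case at every regular $t_0$ puts $\CCC_1$ inside $\Pi$, contradicting the exclusion of coplanar input curves. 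Your argument is more synthetic and leans centrally on the ``$\CCC_2$ is not a line'' hypothesis, whereas the paper's is more computational and uses the non-line hypothesis only at the very end to secure non-parallel tangent vectors; your route also makes transparent \emph{where} planarity of $\CCC_2$ enters, which the paper's Wronskian manipulation somewhat obscures. The one step you should make explicit if you write this up in full is the finite set of excluded parameters (poles, vanishing derivatives, and the finitely many $s$ with $\bfx_2(s)=q_0$), so that the ODE argument is applied on an interval where $\bfw$ and $\bfw'$ are defined and nonzero and then propagated to all of $\CCC_2$ by rationality.
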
 

\begin{proof} We argue by contradiction. Suppose that $N(t,s)\cdot (\bfx_1(t)-\bfx_2(s))$ is identically zero. Then for all $(t,s)\in {\Bbb C}^2$,
\begin{equation}\label{det}
\left|\begin{array}{ccc}
x_1(t)-x_2(s)&y_1(t)-y_2(s)&z_1(t)-z_2(s)\\x'_1(t)&y'_1(t)&z'_1(t)\\ x_2'(s)&y_2'(s)&z_2'(s)
\end{array}\right|=0.
\end{equation}
We will prove that if the above determinant is zero, $\CCC_1$, $\CCC_2$ are contained in a same plane, which was excluded at the beginning of the paper. In order to do this, we will first see that $\CCC_1,\CCC_2$ must both be planar, and then we will prove that the plane containing both curves must coincide. We start proving that $\CCC_1$ is planar. Let $s=s_0$ satisfy that $\bfx_2(s_0)$ is well-defined, and $\bfx_2'(s_0)=(u_0,v_0,w_0)\neq 0$. By substituting $s=s_0$ in Eq. \eqref{det}, we get 
\begin{equation}\label{det2}
\left|\begin{array}{ccc}
x_1(t)-x_0 & y_1(t)-y_0 & z_1(t)-z_0 \\ x'_1(t) & y'_1(t) & z'_1(t)\\ u_0 & v_0 & w_0 \end{array}\right|=0
\end{equation}
for all $t$. Since $\bfx_2'(s_0)=(u_0,v_0,w_0)\neq 0$ we can assume that, say, $w_0$, is nonzero. Then there exist $\gamma,\mu$ such that $u_0=\gamma \cdot w_0,v_0=\mu \cdot w_0$. Now perform the following elementary transformations in the determinant at the left hand-side of Eq. \eqref{det2}: multiply the last column by $-\gamma$, and add it up to the first column; multiply the last column by $-\mu$, and add it up to the second column. Hence, we get
\begin{equation}\label{det3}
\left|\begin{array}{ccc}
x_1(t) -\gamma \cdot z_1(t)-x_0+\gamma \cdot z_0 & y_1(t) - \mu \cdot z_1(t)-y_0+\mu \cdot z_0 & z_1(t)-z_0 \\ x'_1(t) - \gamma \cdot z'_1(t) & y'_1(t) -\mu \cdot z'_1(t) & z'_1(t)\\ 0 & 0 & w_0 \end{array}\right|=0.
\end{equation}
Calling $X(t)=x_1(t)-\gamma \cdot z_1(t)-x_0+\gamma \cdot z_0$, $Y(t)=y_1(t)-\mu \cdot z_1(t)-y_0+\mu \cdot z_0$ and $Z(t)=z_1(t)-z_0$, we have 
\begin{equation}\label{det4}
\left|\begin{array}{ccc}
X(t) & Y(t)& Z(t) \\ X'(t) & Y'(t) & Z'(t)\\ 0 & 0 & w_0 \end{array}\right|=0.
\end{equation}
If $X(t)$ is identically zero then $\CCC_1$ is contained in the plane of equation $x-\gamma z-x_0+\gamma \cdot z_0=0$; similarly, if $Y(t)$ is identically zero then $\CCC_1$ is contained in the plane of equation $y-\mu z-y_0+\mu \cdot z_0=0$. If either $X'(t)$ or $Y'(t)$ are identically zero, then we can also easily conclude that $\CCC_1$ is planar. In any other case, by expanding the determinant at the left hand-side of Eq. \eqref{det4}, we get $X(t)Y'(t)-X'(t)Y(t)=0$, which leads to  $X(t)=\nu \cdot Y(t)$, with $\nu$ a constant. Hence, we deduce that $\CCC_1$ is contained in the plane 
\[x-\nu y-(\gamma-\mu\nu)z-x_0+\nu y_0=0.\]
Let $\Pi_1$ be the plane containing $\CCC_1$, and let $\Pi_2$ be the plane containing $\CCC_2$. Let $P=\bfx_1(t_0)\in \CCC_1$, where the tangent vector $\vec{v}_P=\bfx'_1(t_0)$ to $\CCC_1$ is well-defined, and let $Q=\bfx_2(s_0)\in \CCC_2$, where the tangent vector $\vec{v}_Q$ to $\CCC_2$ is also well-defined. Since $\CCC_1,\CCC_2$ are not both straight lines, we can assume that $\vec{v}_P$ and $\vec{v}_Q$ are not parallel. If $\Pi_1\neq \Pi_2$ then 
$\{\vec{PQ},\vec{v}_P,\vec{v}_Q\}$ are linearly independent. Therefore $\det\left(\vec{PQ},\vec{v}_P,\vec{v}_Q\right)\neq 0$, i.e. $\det\left(\bfx_1(t_0)-\bfx_2(s_0),\bfx'_1(t_0),\bfx'_2(s_0)\right)\neq 0$. However this is contradictory with Eq. \eqref{det}; hence, $\Pi_1=\Pi_2$, which is excluded by hypothesis. 
\end{proof}

Since ${\mathcal N}(t,s)$ is not identically zero, ${\mathcal N}(t,s)$ has finitely many factors; so taking Remark \ref{distinct} into account, Lemma \ref{instru} provides the following corollary.

\begin{corollary}\label{cor2}
The number of projections ${\mathcal P}_{\tilde{\bfa}}$ such that $\CCC_2={\mathcal P}_{\tilde{\bfa}}(\CCC_1)$, is finite.
\end{corollary}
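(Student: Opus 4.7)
The plan is to chain together Lemma~\ref{instru}, Corollary~\ref{cor1}, and Remark~\ref{distinct}: the first provides a fixed nonzero polynomial ${\mathcal N}(t,s)$, the second says that every valid projection must contribute an associated polynomial dividing ${\mathcal N}(t,s)$, and the third guarantees that distinct projections produce distinct such divisors. The finiteness claim will then follow from the purely algebraic fact that a nonzero bivariate polynomial has only finitely many divisors up to scalars.

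First, I would invoke Lemma~\ref{instru} to conclude that ${\mathcal N}(t,s)\in\mathbb{R}[t,s]$ is nonzero. Any nonzero element of $\mathbb{R}[t,s]$ factors into finitely many irreducibles (counted with multiplicity), so the set of its divisors, taken up to multiplication by a nonzero constant, is finite.

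Next, I would apply Corollary~\ref{cor1}: for every projection ${\mathcal P}_{\tilde{\bfa}}$ with $\CCC_2={\mathcal P}_{\tilde{\bfa}}(\CCC_1)$, the associated polynomial $G(t,s)=p(t)-sq(t)$ divides ${\mathcal N}(t,s)$. Normalizing $\psi=p/q$ so that $\gcd(p,q)=1$ makes $G$ unique up to a nonzero scalar, so $\psi$ is recovered from $G$ up to this scalar, and conversely $\psi$ determines $G$ up to a scalar. Combined with Remark~\ref{distinct}, which says that distinct projections produce distinct associated functions $\psi_i$, this gives an injection from the set of projections mapping $\CCC_1$ onto $\CCC_2$ into the finite set of divisors of ${\mathcal N}(t,s)$ up to scalars. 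The finiteness of the latter therefore bounds the number of projections.

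There is no real obstacle here: all the analytic work is contained in Lemma~\ref{instru}, whose failure would make the divisibility argument vacuous. The only subtlety worth flagging is the bookkeeping in the last paragraph — namely, checking that the correspondence $\psi\leftrightarrow G$ up to scalar is well defined under the normalization $\gcd(p,q)=1$ — after which the corollary is a direct counting statement.
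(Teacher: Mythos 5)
Your argument is correct and follows exactly the paper's own route: the paper likewise deduces the corollary from Lemma~\ref{instru} (so ${\mathcal N}(t,s)\not\equiv 0$ has finitely many factors), Corollary~\ref{cor1} (each projection's associated polynomial divides ${\mathcal N}$), and Remark~\ref{distinct} (distinct projections yield distinct $\psi$). Your extra care with the normalization $\gcd(p,q)=1$ is a harmless tightening of a step the paper leaves implicit.
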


We need to consider an additional question. If there are two projections ${\mathcal P}_{\tilde{\bfa}_1}\neq {\mathcal P}_{\tilde{\bfa}_2}$ mapping $\CCC_1$ onto $\CCC_2$, it might exist points $\bfp\in \CCC_1$ and $\bfq\in \CCC_2$ that are mapped onto each other by \emph{both} projections, i.e. such that 
\[
\bfq={\mathcal P}_{\tilde{\bfa}_1}(\bfp)={\mathcal P}_{\tilde{\bfa}_2}(\bfp).\]
For instance, consider the two lemniscatas shown in Fig. \ref{fig:unlucky}. These two lemniscatas are mapped onto each other by two different projections, as shown by the picture. However, both projections map the singular point of $\CCC_1$ onto the singular point of $\CCC_2$. 

\begin{figure}
\begin{center}
$$\begin{array}{cc}
\includegraphics[scale=0.3]{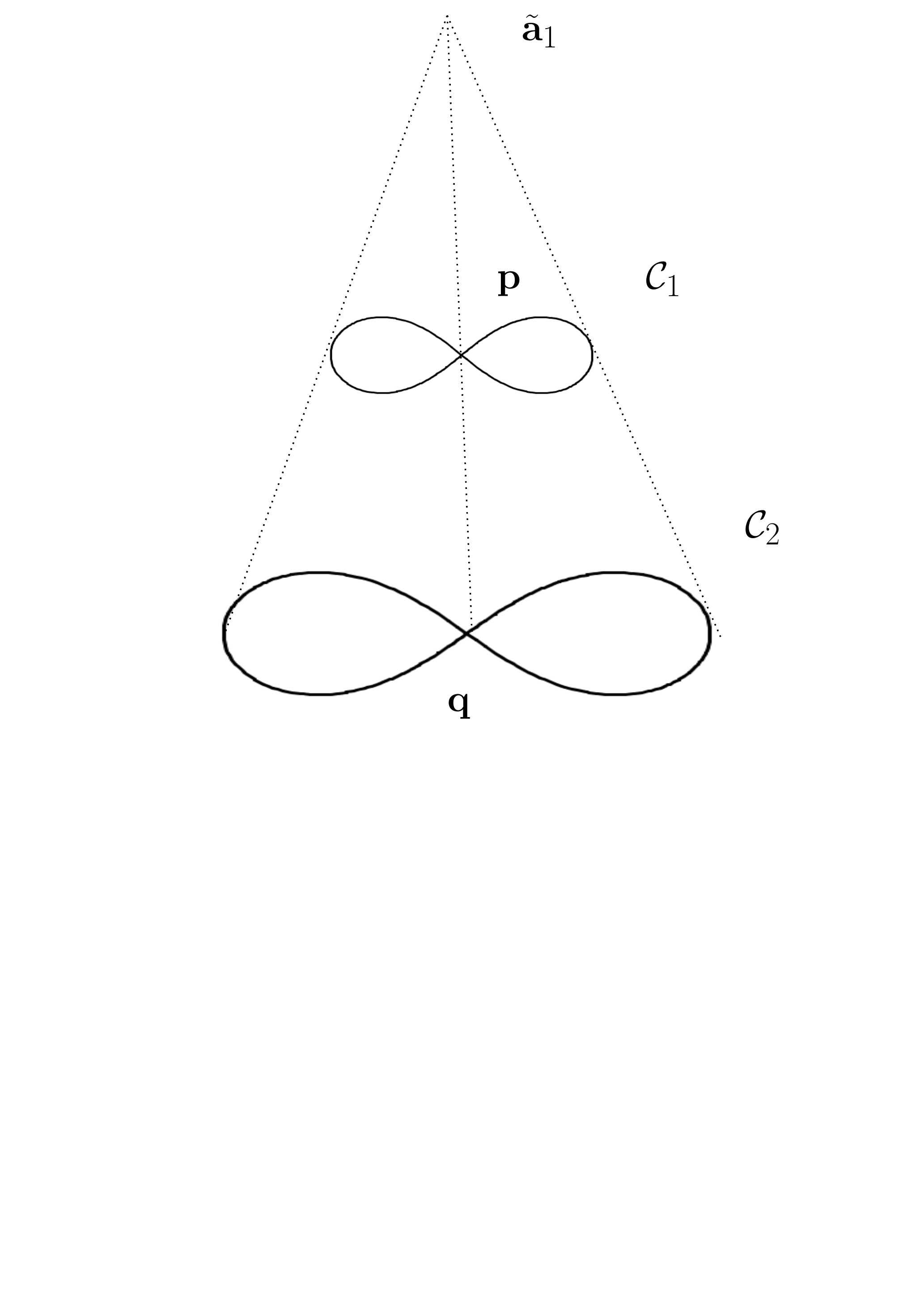} &  \includegraphics[scale=0.3]{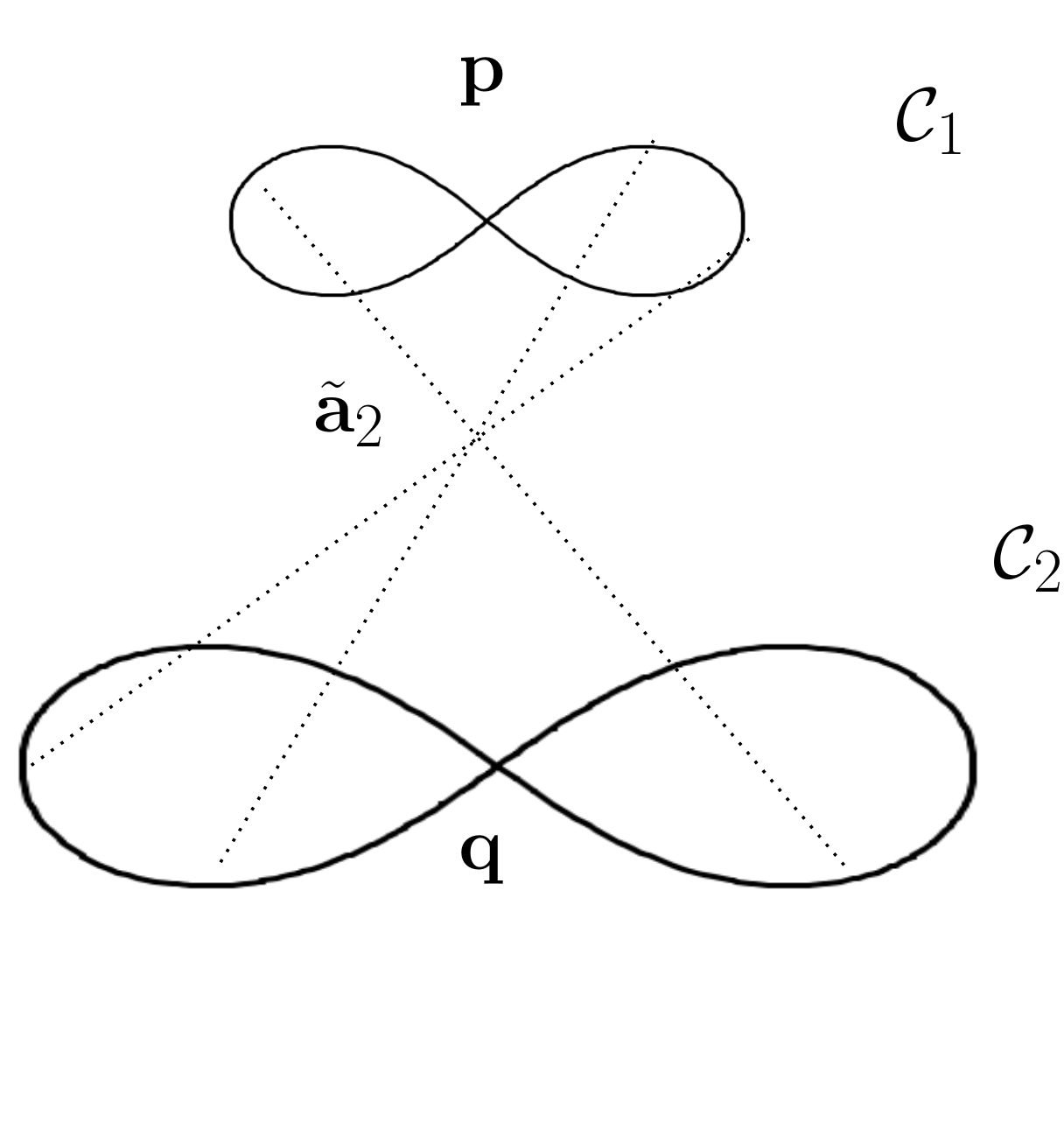}
\end{array}$$
\end{center}
\caption{Unlucky points.}\label{fig:unlucky}
\end{figure}

For these pairs $(\bfp,\bfq)$, the line connecting $\bfp,\bfq$ contains both eye points. The next lemma shows that we just have finitely pairs of this kind, that can be computed. Here we represent by ${\mathcal N}^{\star}$ the planar algebraic curve, in the $(t,s)$ plane, defined by the polynomial ${\mathcal N}(t,s)$. 

\begin{lemma}\label{badpoints}
Let $\tilde{\bfa}_1\neq \tilde{\bfa}_2$ such that ${\mathcal P}_{\tilde{\bfa}_1}$ and ${\mathcal P}_{\tilde{\bfa}_2}$ map $\CCC_1$ onto $\CCC_2$, and let $\bfp\in \CCC_1$ satisfy that 
${\mathcal P}_{\tilde{\bfa}_1}(\bfp)={\mathcal P}_{\tilde{\bfa}_2}(\bfp)$. Let $\bfq={\mathcal P}_{\tilde{\bfa}_i}(\bfp)$, for $i=1,2$. Then one of the following situations occur: (i) $\bfp=P_1^{\infty}$; (ii) $\bfq=P_2^{\infty}$; (iii) $\bfq$ is a self-intersection of $\CCC_2$; (iv) $\bfp=\bfx_1(t_0)$, where the line $t=t_0$ contains a singular point of ${\mathcal N}^{\star}$.
\end{lemma}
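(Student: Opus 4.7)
The plan is to exploit Theorem \ref{essent} and Corollary \ref{cor1} by comparing, at a preimage of $\bfp$ under $\bfx_1$, the two rational functions $\psi_1,\psi_2$ associated with ${\mathcal P}_{\tilde{\bfa}_1}$ and ${\mathcal P}_{\tilde{\bfa}_2}$. By Remark \ref{distinct} these functions are different, and by Corollary \ref{cor1} their associated polynomials $G_1(t,s),G_2(t,s)$, each linear in $s$, are irreducible factors of ${\mathcal N}(t,s)$.

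First I would dispose of the trivial cases. If $\bfp=P_1^\infty$ we are in (i); otherwise there is some $t_0\in \CC$ with $\bfx_1(t_0)=\bfp$. Similarly, if $\bfq=P_2^\infty$ we are in (ii); otherwise, since $\bfx_2$ reaches $\bfq$, $\psi_i(t_0)$ is finite for $i=1,2$ (a pole of $\psi_i$ at $t_0$ would force the image to be $P_2^\infty$). Writing $s_i:=\psi_i(t_0)$, Theorem \ref{essent} gives $\bfq=\bfx_2(s_1)=\bfx_2(s_2)$. If $s_1\neq s_2$, then two distinct parameter values are mapped by $\bfx_2$ to the same point, and since $\bfx_2$ is proper this can occur only at the finitely many self-intersections of $\CCC_2$; hence we are in (iii).

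It remains to treat the case $s_1=s_2=:s_0$. Since $\psi_1\neq \psi_2$ (as rational functions), their associated polynomials $G_1(t,s)$ and $G_2(t,s)$ are not proportional in $\CC[t,s]$: proportionality would force the numerators and denominators of $\psi_1,\psi_2$ to be proportional and hence $\psi_1=\psi_2$. Each $G_i$ is irreducible in $\CC[t,s]$ (it is linear and primitive in $s$, so Gauss's lemma applies), so $G_1$ and $G_2$ define two distinct irreducible components of ${\mathcal N}^\star$. Both vanish at $(t_0,s_0)$, hence $(t_0,s_0)$ lies on the intersection of two distinct components of ${\mathcal N}^\star$ and is therefore a singular point of ${\mathcal N}^\star$. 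In particular the vertical line $t=t_0$ contains a singular point, which is exactly case (iv).

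The main subtlety I anticipate is the careful handling of the exceptional points of the parametrizations, i.e.\ making sure that ``$\psi_i(t_0)$ is well-defined and equals $s_0$'' is correctly translated into ``$G_i(t_0,s_0)=0$'' without the spurious cancellations that can occur when $q_i(t_0)=0$; this is what forces us to separate out the cases $\bfp=P_1^\infty$ and $\bfq=P_2^\infty$ before using the associated polynomials. Once this bookkeeping is in place, the key algebraic fact — that two distinct linear-in-$s$ irreducible factors of ${\mathcal N}$ must meet at singular points of ${\mathcal N}^\star$ — delivers (iv) directly, and the finiteness of the list of such $\bfp$ follows from the finiteness of self-intersections of $\CCC_2$ and of singular points of ${\mathcal N}^\star$.
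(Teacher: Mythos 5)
Your proposal is correct and follows essentially the same route as the paper's proof: reduce to the case where $\bfp=\bfx_1(t_0)$ and both $\psi_i(t_0)$ are defined, split on whether $\psi_1(t_0)=\psi_2(t_0)$, obtaining a self-intersection of $\CCC_2$ in one case and a singular point of ${\mathcal N}^{\star}$ (where the two branches $s=\psi_1(t)$, $s=\psi_2(t)$ meet) in the other. Your extra justification that $G_1,G_2$ are distinct irreducible factors of ${\mathcal N}$, so $(t_0,s_0)$ lies on two distinct components, merely makes explicit what the paper asserts via ``two different branches.''
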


\begin{proof} Let $\psi_1(t),\psi_2(t)$ be associated with ${\mathcal P}_{\tilde{\bfa}_1}$, ${\mathcal P}_{\tilde{\bfa}_2}$. Assume that there exists $t_0\in {\Bbb C}$ such that $\bfp=\bfx_1(t_0)$, and assume also that $\psi_1(t_0)$, $\psi_2(t_0)$ are well defined. Notice that if any of these conditions is not satisfied, then either $\bfp=P_1^{\infty}$ or $\bfq=P_2^{\infty}$. Since ${\mathcal P}_{\tilde{\bfa}_1}(\bfx_1(t_0))={\mathcal P}_{\tilde{\bfa}_2}(\bfx_1(t_0))$ and ${\mathcal P}_{\tilde{\bfa}_i}(\bfx_1(t_0))=\bfx_2(\psi_i(t_0))$ for $i=1,2$, we have 
\[\bfx_2(\psi_1(t_0))=\bfx_2(\psi_2(t_0)).\]Now if $\psi_1(t_0)\neq \psi_2(t_0)$, then $\bfq$ is a self-intersection of $\CCC_2$. If $\psi_1(t_0)=\psi_2(t_0)$, then the point $(t_0,s_0)$, where $s_0=\psi_1(t_0)=\psi_2(t_0)$, is a point of ${\mathcal N}^{\star}$ where two different branches of ${\mathcal N}^{\star}$ (namely, the ones corresponding to the functions $s=\psi_1(t)$ and $s=\psi_2(t)$) intersect. So $(t_0,s_0)$ is a singularity of ${\mathcal N}^{\star}$.
\end{proof}

Lemma \ref{badpoints} gives rise to the following definition.

\begin{definition}\label{luck} Let $(\bfp,\bfq)$, where $\bfp\in \CCC_1$, $\bfq\in \CCC_2$ and ${\mathcal P}_{\tilde{\bfa}}(\bfp)=\bfq$. We say that the pair $(\bfp,\bfq)$ is \emph{lucky}, if $\bfp \neq \bfq$ and $\bfp,\bfq$ do not satisfy any of the conditions (i)-(iv) in Lemma \ref{badpoints}. Otherwise, we say that $(\bfp,\bfq)$ is \emph{unlucky}.
\end{definition}

Notice that the set of unlucky pairs is finite. Observe that a priori we do not know the number of projections mapping $\CCC_1$ onto $\CCC_2$. Hence, the unlucky pairs correspond to the  \emph{potential} points in $\CCC_1$ and $\CCC_2$ such that, if there are several projections mapping $\CCC_1$ onto $\CCC_2$, are mapped onto its partner by all those projections. Unlucky pairs, as we will see later, are to be avoided, because we will be interested in pairs of points such that the line connecting them contains just one eye point, if any. 

In the rest of the paper, given a function $\psi(t)$, associated with some projection, we denote by ${\mathcal G}_{\psi}$ the set of (finitely many) $t_0$ values such that the pair $(\bfp,\bfq)$, where $\bfp=\bfx_1(t_0)$ and $\bfq=\bfx_2(\psi(t_0))$, is unlucky.

\subsection{A factoring-based algorithm.}\label{sec-algo}

As we observed after Corollary \ref{cor1}, in order to solve our problem, once we compute the factors of ${\mathcal N}(t,s)$ which are linear in $s$ we just need to find out which of these factors, if any, correspond to projections mapping $\CCC_1$ onto $\CCC_2$. In order to do so, the notion of \emph{lucky point} is essential. Let $G(t,s)$ be a factor of ${\mathcal N}(t,s)$, linear in $s$, and let $s=\psi(t)$ be the result of solving for $s$ in $G(t,s)=0$. By Lemma \ref{badpoints}, given $t_1\neq t_2$, $t_1,t_2\notin {\mathcal G}_{\psi}$ there is just one projection ${\mathcal P}_{\tilde{\bfa}}$ mapping $\bfx_1(t_j)\to \bfx_2(\psi(t_j))$, $j=1,2$. Furthermore, if $t_1\neq t_2$, $t_1,t_2\notin {\mathcal G}_{\psi}$ there is a line ${\mathcal L}_j$ connecting $\bfx_1(t_j)$ and $\bfx_2(\psi(t_j))$, $j=1,2$, and $\tilde{\bfa}$ is the (projective) intersection point of ${\mathcal L}_1$ and ${\mathcal L}_2$. Then we have the following algorithm, {\tt Algorithm 1}, to solve the projection problem.

\begin{algorithm}[h!]
\begin{algorithmic}[1]
\REQUIRE Two proper, rational parametrizations $\bfx_1(t),\bfx_2(s)$ defining two space curves $\CCC_1,\CCC_2$, where $\CCC_2$ is planar.
\ENSURE The projections, if any, mapping $\CCC_1$ onto $\CCC_2$.
\STATE compute the polynomial ${\mathcal N}(t,s)$.
\STATE find the real factors $G_i(t,s)$ of ${\mathcal N}(t,s)$ which are linear in $s$. 
\FOR{each $G_i(t,s)$}
\STATE{solve $G_i(t,s)$ for $s$, to find $s=\psi_i(t)$.}
\STATE{pick $t_1\neq t_2$, $t_1,t_2\notin {\mathcal G}_{\psi_i}$.} 
\STATE{compute the lines ${\mathcal L}_j$, $j=1,2$, connecting $\bfx_1(t_j)$ and $\bfx_2(\psi(t_j))$.}
\IF{${\mathcal L}_1\cap {\mathcal L}_2$ intersect at an affine point $(c_1,c_2,c_3)$,}
\STATE{$\tilde{\bfa}_i:=[c_1:c_2:c_3:1]$}
\ELSE
\STATE{$\tilde{\bfa}_i:=[v_1:v_2:v_3:0]$, where ${\bf v}=(v_1,v_2,v_3)$ is parallel to ${\mathcal L}_1$ and ${\mathcal L}_2$.}
\ENDIF
\STATE{plug $\tilde{\bfa}_i$ into Eq. \eqref{conic2}, and check if 
\[\displaystyle{\frac{x_2(\psi_i(t))-x_1(t)}{\tilde{a}_4\cdot x_1(t)-\tilde{a}_1}=\frac{y_2(\psi_i(t))-y_1(t)}{\tilde{a}_4\cdot y_1(t)-\tilde{a}_2}=\frac{z_2(\psi_i(t))-z_1(t)}{\tilde{a}_4\cdot z_1(t)-\tilde{a}_3}}.\]}
\STATE{in the affirmative case, add ${\mathcal P}_{\tilde{\bfa}_i}$ to the list of projections (initially empty).}
\ENDFOR
\STATE{{\bf return} the list of projections mapping $\CCC_1$ to $\CCC_2$, or the message {\tt No projection has been found}.}
\end{algorithmic}
\caption*{{\bf Algorithm} 1}
\end{algorithm}

Algorithm 1 is illustrated in the following example. In this example and in the other examples of the paper, the computations have been done with Maple 18, running on a laptop, with 2.9 GHz i7-3520M processor and 8 Gb RAM.

\begin{example}
Let $\CCC_1$ be the curve parametrized by $\bfx_1(t)=(x_1(t),y_1(t),z_1(t))$, where 
\[
\begin{array}{ccc}
x_1(t) & = & \frac{2t^5+11t^4+14t^3-3t^2-12t+4}{(t+2)^2},\\
y_1(t) & = & \frac{-2t^2(t^3+3t^2+3t-2)}{t+2},\\
z_1(t) & = & \frac{2t^6+10t^5+19t^4+14t^3-3t^2-12t+4}{(t+2)^2},
\end{array}
\]
and let $\CCC_2$ be the curve parametrized by
\[\bfx_2(s)=(10s+5,-2(s+3)(s-2),2s^2+s+2).\]One can check that $\CCC_2$ is planar, and that both $\CCC_1$ and $\CCC_2$ are contained in the cone $x^2+y^2-z^2=0$; therefore, they must be related by at least one perspective projection from the origin, which is the vertex of the cone. In this case, by factoring over the rationals, we get 
\[
\begin{array}{ccl}
{\mathcal N}(t,s)&=&-8(t^6+9t^5+5st^3-34t^4+30st^2-330t^3+80ts-684t^2+80s\\
&&-544t-160)\cdot (st^2-5t^3+3ts-12t^2-2s-6t+4).
\end{array}
\]
Therefore, we have
\[\psi_1(t)=\frac{5t^3+12t^2+6t-4}{t^2+3t-2},\mbox{ }\psi_2(t)=\frac{-t^6-9t^5+34t^4+330t^3+684t^2+544t+160}{5(t^3+6t^2+16t+16)}.\]
In order to check if $\psi_1(t)$ corresponds to some projection, we observe first that $t_1=0$, $t_2=1$ are not in ${\mathcal G}_{\psi_1}$. The lines ${\mathcal L}_i$, $i=1,2$, connecting the points $\bfx_1(t_i)$ and $\bfx_2(\psi_1(t_i))$, intersect at the origin. The test in Step 12 of Algorithm 1 is positive, so we conclude that $\CCC_1$ is mapped onto $\CCC_2$ by a perspective projection from the origin, as expected. Furthermore, since $\psi_1(t)$ has degree 3, the projection is degenerate. Now we proceed in the same way with $\psi_2(t)$. In this case we also take $t_1=0$, $t_2=1$; however, the corresponding lines ${\mathcal L}_i$, $i=1,2$ are skew, so $\psi_2(t)$ does not correspond to any projection. The whole computation takes 0.062 seconds. 
\end{example}

If all the factors of ${\mathcal N}(t,s)$ which are linear in $s$ have rational coefficients, then we can find them by applying standard bivariate factoring algorithms over the rationals, implemented in most computer algebra systems. The next result shows that, under the hypothesis that $\bfx_1(t)$ and $\bfx_2(s)$ are parametrizations with rational coefficients, projections from \emph{rational} eye points can always be found this way.

\begin{proposition} \label{rati}
If $\tilde{\bfa}$ has rational coordinates, the polynomial $G(t,s)$ associated with ${\mathcal P}_{\tilde{\bfa}}$ has rational coefficients.
\end{proposition}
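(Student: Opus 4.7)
I would approach this by showing that each map in the composition defining $\psi$ is defined over $\mathbb{Q}$, which forces the associated polynomial $G(t,s)$ to have rational coefficients.

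The plan is to use the diagram \eqref{eq:fundamentaldiagram}, which gives $\psi=\bfx_2^{-1}\circ {\mathcal P}_{\tilde{\bfa}}\circ \bfx_1$, and to argue that every factor in this composition is a rational map with rational coefficients.

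\textbf{Step 1: The plane $\Pi$ has a rational equation.} Since $\bfx_2(s)$ has coefficients in $\mathbb{Q}$ and $\CCC_2\subset \Pi$, writing $\Pi: Ax+By+Cz+D=0$ gives the identity
\[
A\,x_2(s)+B\,y_2(s)+C\,z_2(s)+D=0.
\]
Clearing denominators and equating coefficients of the powers of $s$ yields a homogeneous linear system over $\mathbb{Q}$ in $A,B,C,D$ which has at least one (projective) solution, namely the actual plane. Since $\CCC_2$ is not a straight line, this plane is uniquely determined, so its coefficients $[A:B:C:D]$ lie in $\mathbb{P}^3_{\mathbb{Q}}$.

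\textbf{Step 2: The projection matrix is rational.} Since $\tilde{\bfa}=[\tilde{a}_1:\tilde{a}_2:\tilde{a}_3:\tilde{a}_4]$ has rational coordinates by hypothesis and the plane $\Pi$ has rational coefficients by Step~1, the explicit formula \eqref{matP} shows that ${\mathcal P}_{\tilde{\bfa}}$ is represented by a matrix $P$ with entries in $\mathbb{Q}$. Therefore ${\mathcal P}_{\tilde{\bfa}}\circ \bfx_1$ is a rational parametrization with rational coefficients.

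\textbf{Step 3: The inverse $\bfx_2^{-1}$ has rational coefficients.} This is the key point. Since $\bfx_2$ is a proper parametrization defined over $\mathbb{Q}$, the inverse $\bfx_2^{-1}$ is also a rational map defined over $\mathbb{Q}$. (For plane parametrizations this is \cite{SWPD}; the same argument extends to the parametrization $\bfx_2$ of the planar curve $\CCC_2$ viewed in $\mathbb{R}^3$, see \cite[\S 3.1]{A12}.) Concretely, $\bfx_2^{-1}$ can be obtained by solving a linear system in the coefficients of a rational function of suitable degree, a system whose coefficients are rational.

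\textbf{Step 4: Conclusion.} Writing $\psi(t)=\bfx_2^{-1}({\mathcal P}_{\tilde{\bfa}}(\bfx_1(t)))$, Steps 2 and 3 show $\psi(t)\in \mathbb{Q}(t)$. Writing $\psi(t)=p(t)/q(t)$ with $p,q\in\mathbb{Q}[t]$ coprime, the associated polynomial $G(t,s)=p(t)-s\,q(t)$ has rational coefficients.

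The main obstacle is Step~3: one must invoke the fact that inverses of proper rational parametrizations defined over $\mathbb{Q}$ are themselves defined over $\mathbb{Q}$. Beyond that, the argument is essentially tracking the field of definition of each map through the composition.
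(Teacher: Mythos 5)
Your proof is correct and follows essentially the same route as the paper: rationality of the plane $\Pi$, hence of the matrix $P$ via \eqref{matP}, hence of $\psi$ via the diagram \eqref{eq:fundamentaldiagram}. The only difference is that you make explicit (your Step 3) the fact that $\bfx_2^{-1}$ is defined over $\mathbb{Q}$ because $\bfx_2$ is a proper parametrization with rational coefficients, a point the paper's proof leaves implicit.
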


\begin{proof} Since by hypothesis $\bfx_2(s)$ has rational coefficients, the projection plane $\Pi$, which is the plane containing the curve parametrized by $\bfx_2(s)$, has rational coefficients too. Since also by hypothesis $\tilde{\bfa}$ has rational coefficients, the elements of the matrix $P$ associated with ${\mathcal P}_{\tilde{\bfa}}$ are rational (see Eq. \eqref{matP}). Therefore, from Diagram \eqref{eq:fundamentaldiagram}, $\psi$ has rational coefficients. 
\end{proof}

However, as we saw in Section \ref{sec-state}, we can have projections mapping $\CCC_1$ onto $\CCC_2$ from a point with non-rational coefficients. In order to also find these projections, we need to factor ${\mathcal N}(t,s)$ over the reals, i.e. one needs an {\it absolute} factorization of ${\mathcal N}(t,s)$ \cite{CGKW}, \cite{Gao}, \cite{Kal08}. This is implemented, for instance, in the computer algebra system Maple 18 through the command {\tt AFactors}, and works finely for moderate and medium degrees. 


\begin{example}
Let $\CCC_1$ be parametrized by $\bfx_1(t)=\left(\frac{p_1(t)}{p_4(t)},\frac{p_2(t)}{p_4(t)},\frac{p_3(t)}{p_4(t)}\right)$, where
\[
\begin{array}{ccl}
p_1(t) & = & 2t^6+t^3-t^2-3,\\
p_2(t) & = & -t^{11}-t^9-3t^8-2t^5+t^3,\\
p_3(t) & = & -3t^{11}+t^5-t^3-3t^2+2t+1,\\
p_4(t) & = & -2t^{10}-t^8-3t^6-3t^5-t+1.
\end{array}
\]Also, let $\CCC_2$ be parametrized by $\bfx_2(s)=\left(\frac{q_1(s)}{q_4(s)},\frac{q_2(s)}{q_4(s)},\frac{q_3(s)}{q_4(s)}\right)$, where
\[
\begin{array}{ccl}
q_1(s) & = & -4s^{11}-s^9-3s^8-s^5-3s^2+2s+1,\\
q_2(s) & = & -3s^{11}+2s^6+s^5-4s^2+2s-2,\\
q_3(s) & = & 7s^{11}+s^9+3s^8-2s^6+7s^2-4s+1,\\
q_4(s) & = & -4s^{11}+2s^{10}-s^9-2s^8+5s^6+2s^5+s^3-4s^2+3s-3.
\end{array}
\]
One can check that $\CCC_2$ is the image of $\CCC_1$ under the projection defined by the matrix 
\[
P=\begin{bmatrix} 
0 & 1 & 1 & 0 \\
1 & 0 & 1 & 0 \\
-1 & -1 & 2 & 0 \\
1 & 1 & 1 & -1
\end{bmatrix}.
\]
An absolute factorization of ${\mathcal N}(t,s)$ yields ${\mathcal N}(t,s)={\mathcal N}_1(t,s)\cdot {\mathcal N}_2(t,s)$. The first factor is ${\mathcal N}_1(t,s)=st+s+t-1$. 
Therefore, it gives rise to
\[\psi_1(t)=-\frac{t-1}{t+1}.\]One can check that $t_1=3$, $t_2=4$ satisfy that $t_1,t_2\notin {\mathcal G}_{\psi_1}$. Then we compute the line ${\mathcal L}_1$, connecting the points $\bfx_1(3),\bfx_2(\psi_1(3))$, and the line ${\mathcal L}_2$, connecting the points $\bfx_1(4),\bfx_2(\psi_1(4))$. These two lines intersect at the affine point $(1,1,-1)$. Furthermore, the test in Step 12 of Algorithm 1 is positive, so we conclude that $\CCC_2$ is the perspective projection of $\CCC_1$ from the point $(1,1,-1)$.  On the other hand, the second factor ${\mathcal N}_2(t,s)$ is a big polynomial, of total degree equal to 38, and infinity norm equal to 319426480. Since the degree in $s$ of ${\mathcal N}_2(t,s)$ is equal to 19, therefore different from 1, ${\mathcal N}_2(t,s)$ does not correspond to any projection. The whole computation takes 3.526 seconds. 
\end{example} 

\section{An alternative method.} \label{sec-alter}

In this section we present another method for computing the projections that does not require absolute factoring, and that, additionally, can be adapted to curves which are known up to finite precision, i.e. whose parametrizations have floating point coefficients. The method uses the fact that we have a rational transformation $\psi$ behind each projection, as we saw in the previous section, without actually computing it. 

We first provide a symbolic version of the method. In general, this symbolic method requires to compute the primitive element of an algebraic extension, and then carry out computations in the new simple extension. However, these operations can be time-consuming. Therefore in practice this symbolic method is only useful when $\CCC_1$ and $\CCC_2$ are low-degree curves. However, one can also have an \emph{approximate} version of the method, which is really fast. 

In the symbolic version of the method, we first find tentative values $\tilde{\bfa}_i$ for the eye points of the projections mapping $\CCC_1$ onto $\CCC_2$, if any; then, for each $\tilde{\bfa}_i$, we check whether ${\mathcal P}_{\tilde{\bfa}_i}(\CCC_1)=\CCC_2$. In the approximate version, we compute approximations of the $\tilde{\bfa}_i$, and then we evaluate ``how close" ${\mathcal P}_{\tilde{\bfa}_i}(\CCC_1)$ and $\CCC_2$ are, i.e. we check whether ${\mathcal P}_{\tilde{\bfa}_i}(\CCC_1) \approx \CCC_2$, instead of ${\mathcal P}_{\tilde{\bfa}_i}(\CCC_1)=\CCC_2$.

\subsection{Symbolic version of the method.} \label{subsec-symb}

The first step is to find tentative values $\tilde{\bfa}_i$ for the eye points of the projections mapping $\CCC_1$ onto $\CCC_2$, if any. In order to do this, we need to compute two different lucky pairs, $(\bfp_1,\bfq_1)$ and $(\bfp_2,\bfq_2)$, which are related by the {\it same} projection.  

\begin{lemma}\label{thesame}
Let $(\bfp_1,\bfq_1)$ and $(\bfp_2,\bfq_2)$ be two different lucky pairs such that $\bfp_j\in \CCC_1$, $\bfq_j\in \CCC_2$, $\bfp_j=\bfx_1(t_j)$, $\bfq_j=\bfx_2(s_j)$, $t_j,s_j\in {\Bbb R}$ for $j=1,2$, and the points $(t_1,s_1)$ and $(t_2,s_2)$ belong to ${\mathcal N}^{\star}$. If $(t_1,s_1)$ and $(t_2,s_2)$ are connected by one real branch of the curve ${\mathcal N}^{\star}$, such that there is no point $(\widehat{t},\widehat{s})\in {\mathcal N}^{\star}$, with $\widehat{t}\in [t_1,t_2]$, where the partial derivative $\frac{\partial {\mathcal N}}{\partial t}$ vanishes, then there is at most one projection ${\mathcal P}_{\tilde{\bfa}}$ mapping $\CCC_1$ onto $\CCC_2$, such that ${\mathcal P}_{\tilde{\bfa}}(\bfp_j)=\bfq_j$ for both $j=1,2$.
\end{lemma}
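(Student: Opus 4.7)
The plan is to argue by contradiction. Suppose there exist two distinct projections $\mathcal{P}_{\tilde{\bfa}}$ and $\mathcal{P}_{\tilde{\bfa}'}$ with $\tilde{\bfa}\neq \tilde{\bfa}'$ such that both send $\bfp_j$ to $\bfq_j$ for $j=1,2$. The goal is to exhibit a point of $\mathcal{N}^{\star}$ with $\widehat{t}\in[t_1,t_2]$ at which $\partial\mathcal{N}/\partial t$ vanishes, contradicting the hypothesis.

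First I would invoke Theorem~\ref{essent} for each projection to obtain associated rational functions $\psi$ and $\psi'$. Since $\mathcal{P}_{\tilde{\bfa}}\circ \bfx_1 = \bfx_2\circ \psi$ and $\bfx_2$ is proper, the chain of equalities $\bfx_2(\psi(t_j))=\bfq_j=\bfx_2(s_j)$ forces $\psi(t_j)=s_j$, and analogously $\psi'(t_j)=s_j$, for $j=1,2$. Remark~\ref{distinct} then gives $\psi\neq \psi'$, so writing both rational functions in lowest terms, their associated polynomials $G(t,s)$ and $G'(t,s)$, each linear in $s$, are non-proportional in $\mathbb{R}[t,s]$. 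By Corollary~\ref{cor1} both $G$ and $G'$ divide the square-free polynomial $\mathcal{N}(t,s)$, so they appear as two distinct irreducible factors; in particular we may write $\mathcal{N}=G\cdot G'\cdot R$ for some $R\in \mathbb{R}[t,s]$.

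The contradiction then follows from the key observation that $G(t_j,s_j)=G'(t_j,s_j)=0$ for $j=1,2$. Applying the product rule to $\mathcal{N}=G\, G'\, R$, every term appearing in $\partial \mathcal{N}/\partial t$ contains either $G$ or $G'$ as a factor, so
\[
\frac{\partial \mathcal{N}}{\partial t}(t_j,s_j)=0,\qquad j=1,2.
\]
Since $t_j\in[t_1,t_2]$ and $(t_j,s_j)\in \mathcal{N}^{\star}$, this contradicts the hypothesis. Geometrically, the same argument shows that two distinct factors of $\mathcal{N}$ would force $(t_j,s_j)$ to be crossings of two branches of $\mathcal{N}^{\star}$, which is incompatible with the assumption that a single smooth real branch connects them.

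The main technicality I expect is justifying carefully that $G$ and $G'$ are honestly distinct irreducible factors of $\mathcal{N}$ rather than scalar multiples: if they were proportional, then solving each one for $s$ would give $\psi=\psi'$, contradicting Remark~\ref{distinct}. I would also remark that the lucky hypothesis on $(\bfp_1,\bfq_1)$ offers an alternative route to the same conclusion: by Lemma~\ref{badpoints}, two distinct projections both mapping $\bfp_1$ to $\bfq_1$ would force $(\bfp_1,\bfq_1)$ to be unlucky, contradicting the assumption.
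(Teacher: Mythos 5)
Your proof is correct and follows essentially the same route as the paper's: the paper argues that the derivative condition allows at most one rational function $s=\psi(t)$ with graph in ${\mathcal N}^{\star}$ through $(t_1,s_1)$ and $(t_2,s_2)$, and then invokes the correspondence between projections and $\psi$'s (Remark \ref{distinct}). Your factorization-plus-product-rule computation is just the explicit justification of that "at most one branch" claim (two distinct irreducible factors through $(t_1,s_1)$ would make it a singular point of ${\mathcal N}^{\star}$, killing $\partial{\mathcal N}/\partial t$ there), and your closing observation via Lemma \ref{badpoints} and luckiness is also valid.
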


\begin{proof} If $(t_1,s_1)$ and $(t_2,s_2)$ are connected by a real branch of ${\mathcal N}^{\star}$ in the conditions of the lemma, then there is at most one rational function $s=\psi(t)$, whose graph is contained in ${\mathcal N}^{\star}$, connecting $(t_1,s_1)$ and $(t_2,s_2)$. Since different projections correspond to different $\psi(t)$s, the result follows.
\end{proof}

If $(\bfp_1,\bfq_1)$ and $(\bfp_2,\bfq_2)$ satisfy the hypotheses of Lemma \ref{thesame}, we will say that the corresponding points $(t_1,s_1)$, $(t_2,s_2)$ of ${\mathcal N}^{\star}$ are \emph{compatible}. This notion, together with Lemma \ref{thesame}, is illustrated in Figure \ref{fig:compat}.

\begin{figure}
\begin{center}
\includegraphics[scale=0.4]{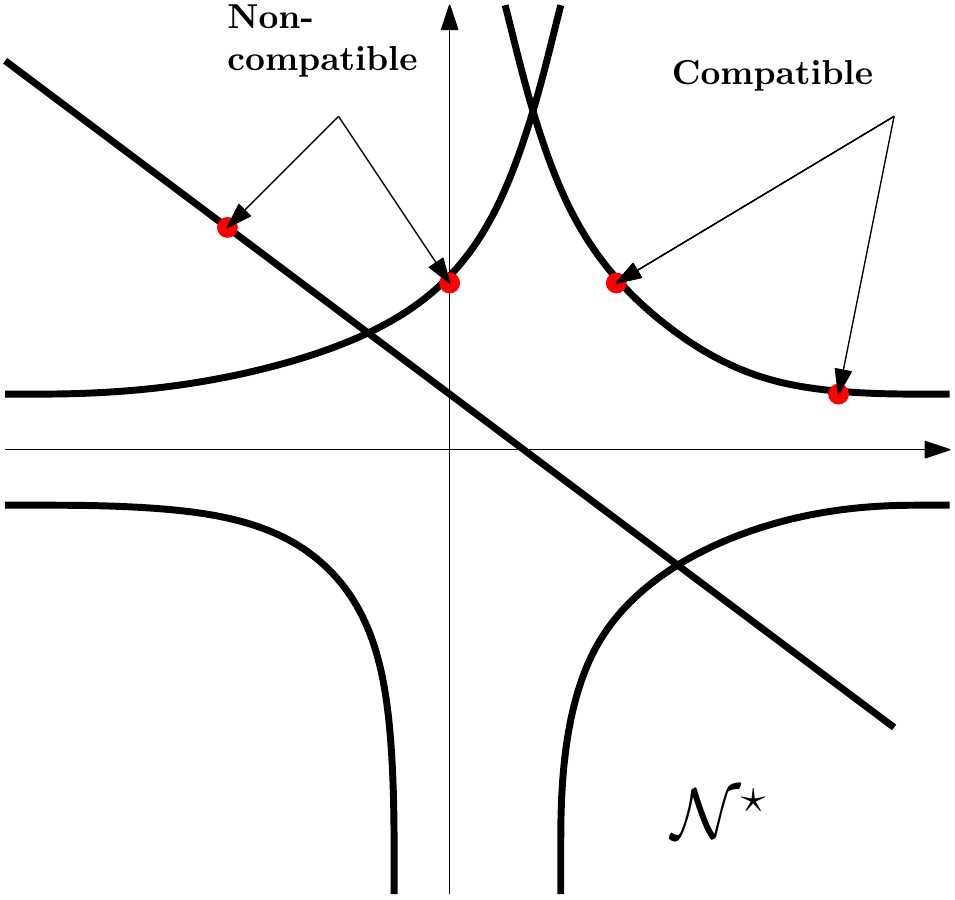} 
\end{center}
\caption{Illustrating Lemma \ref{thesame}: }\label{fig:compat}
\end{figure}

Now we proceed in the following way. Consider a line $t=t_1$, which is not a vertical asymptote of ${\mathcal N}^{\star}$, and not containing any point of ${\mathcal N}^{\star}$ where the partial derivative $\frac{\partial {\mathcal N}}{\partial t}$ vanishes. Notice that since ${\mathcal N}(t,s)$ is square-free, there are just finitely many points of ${\mathcal N}^{\star}$ where $\frac{\partial {\mathcal N}}{\partial t}$ is zero. Therefore, the line $t=t_1$ intersects all the graphs of the different functions $s=\psi(t)$ associated with the projections mapping $\CCC_1$ onto $\CCC_2$. So let $s_{1,1}<\cdots<s_{1,m}$ be the real roots of $n_1(s)={\mathcal N}(t_1,s)$. For each $s_{1,i}$, by Lemma \ref{badpoints} there is just one projection, if any, mapping $\bfx_1(t_1)\to \bfx_2(s_{1,i})$. Furthermore, taking $t_2>t_1$ such that there is no $t$-value in $[t_1,t_2]$ 
corresponding to a vertical asymptote of ${\mathcal N}^{\star}$, or a point of ${\mathcal N}^{\star}$ where $\frac{\partial {\mathcal N}}{\partial t}$ vanishes, $n_2(s)={\mathcal N}(t_2,s)$ also has $m$ real roots $s_{2,1}<\cdots<s_{2,m}$. Therefore, for each $i=1,\ldots,m$ the points $(t_1,s_{1,i})$, $(t_2,s_{2,i})$ are compatible, so they correspond to the same projection ${\mathcal P}_{\tilde{\bfa}_i}$, if any. Hence, by intersecting the lines ${\mathcal L}_i$ and $\tilde{\mathcal L}_i$, connecting the points $\bfx_1(t_1),\bfx_2(s_{1,i})$ and $\bfx_1(t_2),\bfx_2(s_{2,i})$, respectively, we get tentative values for the $\tilde{\bfa}_i$. 

If $s_{1,i}$ and $s_{2,i}$ are rational then we can compute $\tilde{\bfa}_i$ exactly. Otherwise we need to work in an algebraic extension. However, in general we will have two different algebraic numbers $\beta,\gamma$ for $s_{1,i}$ and $s_{2,i}$ respectively, so we need to compute the primitive element $\alpha$ of a double field extension ${\Bbb Q}(\beta,\gamma)$; an algorithm to do so can be found in page 145 of \cite{winkler}. Then the intersection point ${\mathcal L}_i\cap \tilde{\mathcal L}_i$ can be determined doing computations in ${\Bbb Q}(\alpha)$. 

Once the tentative $\tilde{\bfa}_i$ have been computed, we need to check, for each $i$, whether ${\mathcal P}_{\tilde{\bfa}_i}(\CCC_1)=\CCC_2$. In order to do this, let \[\widehat{\bfx}_1(t)=(\widehat{x}_1(t),\widehat{y}_1(t),\widehat{z}_1(t))={\mathcal P}_{\tilde{\bfa}_i}(\bfx_1(t)).\]Since ${\mathcal P}_{\tilde{\bfa}_i}(\CCC_1)$ and $\CCC_2$ are rational they are irreducible, so ${\mathcal P}_{\tilde{\bfa}_i}(\CCC_1)$ and $\CCC_2$ are equal iff they have infinitely many points in common. In order to check this, let us denote the numerators of $\widehat{x}_1(t)-x_2(s)$, $\widehat{y}_1(t)-y_2(s)$, $\widehat{z}_1(t)-z_2(s)$ by $n_1(t,s)$, $n_2(t,s)$, $n_3(t,s)$. Then  ${\mathcal P}_{\tilde{\bfa}_i}(\CCC_1)=\CCC_2$ iff $\gcd(n_1(t,s),n_2(t,s),n_3(t,s))\neq 1$.

So we get the following algorithm, {\tt Algorithm 2}, for solving the projection problem. Algorithm 2 is illustrated in a toy example, Example \ref{Ex2}.

\begin{algorithm}[h!]
\begin{algorithmic}[1]
\REQUIRE Two proper, rational parametrizations $\bfx_1(t),\bfx_2(s)$ defining two space curves $\CCC_1,\CCC_2$, where $\CCC_2$ is planar.
\ENSURE The projections, if any, mapping $\CCC_1$ onto $\CCC_2$.
\STATE compute the polynomial ${\mathcal N}(t,s)$ as the numerator of the left hand-side of Eq. \eqref{condi}.
\STATE pick $t_1,t_2\in {\Bbb Q}$ such that $t=t_1$, $t=t_2$ are not asymptotes of ${\mathcal N}^{\star}$, and such that no singularity of ${\mathcal N}^{\star}$ lies in $[t_1,t_2]\times {\Bbb R}$.
\STATE Let $s_{j,1},\ldots,s_{j,m}$, $j=1,2$, be the real roots of $n_j(s)={\mathcal N}(t_j,s)$.
\FOR{$i=1,\ldots,m$}
\STATE{check if the pair $(\bfx_1(t_1),\bfx_2(s_{1,i}))$, is lucky; in the negative case, pick a different value for $t_1$.}
\STATE{check if the pair $(\bfx_1(t_2),\bfx_2(s_{2,i}))$, is lucky; in the negative case, pick a different value for $t_2$.}
\STATE{compute the line ${\mathcal L}_i$ connecting the points $\bfx_1(t_1)$ and $\bfx_2(s_{1,i})$.}
\STATE{compute the line $\tilde{\mathcal L}_i$ connecting the points $\bfx_1(t_2)$ and $\bfx_2(s_{2,i})$.}
\STATE{compute the intersection point $\tilde{\bfa}_i$ (possibly at infinity), if any, of ${\mathcal L}_i$ and $\tilde{\mathcal L}_i$; if ${\mathcal L}_i\cap\tilde{\mathcal L}_i=\emptyset$, $i:=i+1$.} 
\STATE{find ${\mathcal P}_{\tilde{\bfa}_i}$.}
\IF{${\mathcal P}_{\tilde{\bfa}_i}(\CCC_1)=\CCC_2$}
\STATE{add ${\mathcal P}_{\tilde{\bfa}_i}$ to the list of projections (initially empty).}
\ENDIF
\ENDFOR
\STATE{{\bf return} the list of projections mapping $\CCC_1$ to $\CCC_2$, or the message {\tt No projection has been found}.}
\end{algorithmic}
\caption*{{\bf Algorithm} 2}
\end{algorithm}

\begin{example}\label{Ex2}
Let $\CCC_1$ be parametrized by $\bfx_1(t)=\left(\frac{-t^2+2t+1}{t^2+1},\frac{t^2+2t-1}{t^2+1},0\right)$, and let $\CCC_2$ be parametrized by $\bfx_2(s)=\left(\frac{1-s^2}{1+s^2},\frac{2s}{1+s^2},1\right)$. One can easily recognize that $\CCC_1$ and $\CCC_2$ are two circles, placed on parallel planes; therefore, we should find two different perspective projections mapping $\CCC_1$ onto $\CCC_2$. In order to check this, we first compute ${\mathcal N}(t,s)$, which yields 
\[{\mathcal N}(t,s)=-4s^2t^2-8s^2t+8st^2+4s^2-16st+4t^2-8s+8t-4.\]Now we proceed with Algorithm 2. In order to do this, we pick $t_1=0$ and $t_2=1$, both satisfying the requirements in Step 2. For $t_1=0$, we get $n_1(s)={\mathcal N}(0,s)=4s^2-8s-4$, which is irreducible over ${\Bbb Q}$. The pair $(\bfx_1(0),\bfx_2(\beta))$, where $4\beta^2-8\beta+4=0$, is lucky; observe that since the polynomial $4\beta^2-8\beta+4$ has two different roots, in fact $(\bfx_1(0),\bfx_2(\beta))$ represents two different pairs, that we can treat at the same time. Also, for $t=t_2$ we get $n_2(s)=-8s^2-16s+8$, which is irreducible over ${\Bbb Q}$ too. The pair $(\bfx_1(1),\bfx_2(\gamma))$, where $-8\gamma^2-16\gamma+8=0$, is also lucky. Furthermore, one can check that $\gamma=-\beta$, so we can write $(\bfx_1(1),\bfx_2(-\beta))$, instead, where $4\beta^2-8\beta+4=0$; as before, $(\bfx_1(1),\bfx_2(-\beta))$ represent two different pairs that we treat simultaneously. Now the parametric representation of the line ${\mathcal L}_{\beta}$ connecting $\bfx_1(0)$ and $\bfx_2(\beta)$ is 
\[{\mathcal L}_{\beta}\equiv \left[1+\mu\cdot \left(1-\frac{1-\beta^2}{1+\beta^2}\right), -1+\mu\cdot\left(-1+\frac{2\beta}{\beta^2+1}\right), -\mu,\right],\]
where $\mu$ is the parameter. The parametric representation of the line $\tilde{\mathcal L}_{\beta}$ connecting $\bfx_1(1)$ and $\bfx_2(-\beta)$ is 
\[\tilde{\mathcal L}_{\beta}\equiv \left[1+\nu\cdot \left(1-\frac{1-\beta^2}{1+\beta^2}\right), 1+\nu\cdot\left(-1+\frac{2\beta}{\beta^2+1}\right), -\nu,\right],\]
where $\nu$ is the parameter. ${\mathcal L}_{\beta}$ and $\tilde{\mathcal L}_{\beta}$ intersect at $(0,0,3+\beta)$. Since the two roots of $4\beta^2-8\beta+4=0$ are real, we have two candidates for the eye point of a projection mapping $\CCC_1$ onto $\CCC_2$, that we can analyze simultaneously. In order to check whether or not $(0,0,3+\beta)$ corresponds to some projection of $\CCC_1$ onto $\CCC_2$, we substitute $\tilde{\bfa}_{\beta}=[0:0:3+\beta:1]$ into Eq. \eqref{matP}, to get 
\begin{equation}\label{matPP}
\hspace{-1.5 cm}P=\begin{bmatrix} 
-(3+\beta)-1 & 0 & 0 & 0\\
0 & -(3+\beta)-1 & 0 & 0\\
0 & 0 & 1 & -(3+\beta)\\
0 & 0 & 1 & -(3+\beta)
\end{bmatrix}.
\end{equation}
Hence, we have 
\[
\begin{array}{l}
{\mathcal P}_{\tilde{\bfa}_{\beta}}(\bfx_1(t))=(\widehat{x}_1(t),\widehat{y}_1(t),\widehat{z}_1(t))=\\
=\displaystyle{\left(\frac{(-2-\beta)(-t^2+2t+1)}{(-3-\beta)(t^2+1)},\frac{(-2-\beta)(t^2+2t-1)}{(-3-\beta)(t^2+1)},1\right)}.
\end{array}
\]
Finally, the gcd of the numerators of the components of $\widehat{\bfx}_1(t)-\bfx_2(s)$ is 
\[-s\beta+\beta t+st-2s+2t+1,\]therefore different from 1. So ${\mathcal P}_{\tilde{\bfa}_{\beta}}(\CCC_1)=\CCC_2$. The whole computation takes 0.234 seconds.
\end{example}

As mentioned before, from a computational point of view the drawback of Algorithm 2 is the fact that it requires computing first a primitive element, and then carry out computations in the new simple algebraic extension. Hence, in general it does not work better than Algorithm 1. However, the idea behind Algorithm 2 is useful to provide an approximate method, which is done in the next subsection.


\subsection{Approximate version of the method.} \label{subsec-approx}

The computation of a simple algebraic extension is necessary when we want to symbolically carry out the intersection of the lines ${\mathcal L}_i$ and $\tilde{\mathcal L}_i$. We can alternatively find an approximation of ${\mathcal L}_i\cap\tilde{\mathcal L}_i$ by proceeding in the following way: (a) approximate the values of $s_i$ and $\tilde{s}_i$, so that we also approximate the equations of the lines ${\mathcal L}_i$ and $\tilde{\mathcal L}_i$; (b) approximate ${\mathcal L}_i\cap\tilde{\mathcal L}_i$, to get approximations of the $\tilde{\bfa}_i$; (c) evaluate ``how close" ${\mathcal P}_{\tilde{\bfa}_i}(\CCC_1)$ and $\CCC_2$ are. 

In order to evaluate the closeness between ${\mathcal P}_{\tilde{\bfa}_i}(\CCC_1)$ and $\CCC_2$, observe first that a projective parametrization of ${\mathcal P}_{\tilde{\bfa}_i}(\CCC_1)$ is provided by $P\cdot \tilde{\bfx}_1(t)$, where $P$ is the matrix in Eq. \eqref{matP}, and 
\[\tilde{\bfx}_1(t)=[x_1(t):y_1(t):z_1(t):1].\]
Therefore, ${\mathcal P}_{\tilde{\bfa}_i}(\CCC_1)$ and $\CCC_2$ are two curves contained in the same plane $\Pi$. To measure how close they are, we can use the \emph{Hausdorff distance} \cite{Alip} between these two curves. More precisely, given a metric space $(X,d)$, the distance between an element $a\in X$ and a non-empty set $B\subset X$ is defined as $d(a,B)=\mbox{inf}_{b\in B}d(a,b)$. Furthermore, given two non-empty subsets $A,B\subset X$, the Hausdorff distance between $A,B$ is 
\[H(A,B)=\mbox{max}\{\mbox{sup}_{a\in A}\{d(a,B)\}, \mbox{sup}_{b\in B}\{d(b,A)\}\}.\]In our case, $d$ will be the usual Euclidean distance, so we will just write $H(A,B)$. The Hausdorff distance is a natural and widely used measure to evaluate the closeness between two objects, and, in particular, to check whether two objects are approximately equal. One can find algorithms to approximate the Hausdorff distance in \cite{Bai}, \cite{Chen}, \cite{Kim}, \cite{J00}, for the case of planar curves; in \cite{Rueda13}, \cite{Rueda}, for space curves; and in \cite{Aspert}, \cite{Barton}, \cite{Brons} for surfaces. Additionally, some theoretical aspects of the question are treated in \cite{Blasco}, for algebraic curves in $n$-space. 

Therefore, in order to decide whether ${\mathcal P}_{\tilde{\bfa}_i}(\CCC_1)\approx \CCC_2$, we compute the Hausdorff distance between ${\mathcal P}_{\tilde{\bfa}_i}(\CCC_1)$ and $\CCC_2$ (by whatever method), and fix a certain tolerance $\epsilon$. Then we will admit ${\mathcal P}_{\tilde{\bfa}_i}(\CCC_1)\approx \CCC_2$ whenever $H({\mathcal P}_{\tilde{\bfa}_i}(\CCC_1), \CCC_2)<\epsilon$. So we get the following algorithm, {\tt Algorithm 3}, to find the ${\tilde{\bfa}_i}$ such that ${\mathcal P}_{\tilde{\bfa}_i}(\CCC_1)\approx \CCC_2$. Notice that this algorithm is also applicable to the case of curves with perturbed coefficients, so that the coefficients of the definining parametrizations are known only up to a certain precision. 

\begin{algorithm}[h!]
\begin{algorithmic}[1]
\REQUIRE Two proper, rational parametrizations $\bfx_1(t),\bfx_2(s)$ defining two space curves $\CCC_1,\CCC_2$, where $\CCC_2$ is planar.
\ENSURE The eye points ${\tilde{\bfa}_i}$, if any, such that ${\mathcal P}_{\tilde{\bfa}_i}(\CCC_1)\approx \CCC_2$, under a certain tolerance $\epsilon$.
\STATE compute the polynomial ${\mathcal N}(t,s)$ as the numerator of the left hand-side of Eq. \eqref{condi}.
\STATE pick $t_1,t_2\in {\Bbb Q}$ such that $t=t_1$, $t=t_2$ are not asymptotes of ${\mathcal N}^{\star}$, and such that no singularity of ${\mathcal N}^{\star}$ lies in $[t_1,t_2]\times {\Bbb R}$.
\STATE Let $s_{j,1},\ldots,s_{j,m}$, $j=1,2$, be the real roots of $n_j(s)={\mathcal N}(t_j,s)$.
\FOR{$i=1,\ldots,m$}
\STATE{check if the pair $(\bfx_1(t_1),\bfx_2(s_{1,i}))$, is lucky; in the negative case, pick a different value for $t_1$.}
\STATE{check if the pair $(\bfx_1(t_2),\bfx_2(s_{2,i}))$, is lucky; in the negative case, pick a different value for $t_2$.}
\STATE{compute the line ${\mathcal L}_i$ connecting the points $\bfx_1(t_1)$ and $\bfx_2(s_{1,i})$.}
\STATE{compute the line $\tilde{\mathcal L}_i$ connecting the points $\bfx_1(t_2)$ and $\bfx_2(s_{2,i})$.}
\STATE{approximate the intersection point $\tilde{\bfa}_i$ of ${\mathcal L}_i$ and $\tilde{\mathcal L}_i$.}
\STATE{compute ${\mathcal P}_{\tilde{\bfa}_i}(\CCC_1).$}
\STATE{compute or estimate $H({\mathcal P}_{\tilde{\bfa}_i}(\CCC_1), \CCC_2)$. }
\IF{$H({\mathcal P}_{\tilde{\bfa}_i}(\CCC_1), \CCC_2)<\epsilon$}
\STATE{add ${\mathcal P}_{\tilde{\bfa}_i}$ to the list of ``approximate" projections (initially empty).}
\ENDIF
\ENDFOR
\STATE{{\bf return} the list of projections such that ${\mathcal P}_{\tilde{\bfa}_i}(\CCC_1)\approx \CCC_2$, or the message {\tt No projection has been found}.}
\end{algorithmic}
\caption*{{\bf Algorithm} 3}
\end{algorithm}

\begin{example}\label{exfin} Let $\CCC_1$ be parametrized by $\bfx_1(t)=\left(\frac{p_1(t)}{p_4(t)},\frac{p_2(t)}{p_4(t)},\frac{p_3(t)}{p_4(t)}\right)$, where
\[
\begin{array}{ccl}
p_1(t) & = & -2t^4-t^3-2t^2-2t,\\
p_2(t) & = & -t^4-t^3-3t^2-2t+1,\\
p_3(t) & = & t^3+2t^2-2t-1,\\
p_4(t) & = & t^4+1.
\end{array}
\]Also, let $\CCC_2$ be parametrized by $\bfx_2(s)=\left(\frac{q_1(s)}{q_4(s)},\frac{q_2(s)}{q_4(s)},\frac{q_3(s)}{q_4(s)}\right)$, where
\[
\begin{array}{ccl}
q_1(s) & = & -s^2(3s^2+2s+3),\\
q_2(s) & = & -(2s^3+6s^2-3),\\
q_3(s) & = & 3s^4+4s^3+9s^2-3,\\
q_4(s) & = & 6s^4+s^3+3s^2+6s+3.
\end{array}
\]
One can check that $\CCC_2$ is the image of $\CCC_1$ under the projection defined by the matrix 
\[
P=\begin{bmatrix} 
-2 & 1 & 1 & 0 \\
1 & -2 & 1 & 0 \\
1 & 1 & -2 & 0 \\
1 & 1 & 1 & -3
\end{bmatrix}.
\]
Furthermore, $\CCC_2$ is contained in the plane $x+y+z=0$. The polynomial ${\mathcal N}(t,s)$ has two irreducible factors over ${\Bbb Q}$, the first one being $t-s$, the second one a dense bivariate polynomial of degree 10. The first factor gives rise to the rational function $\psi(t)=t$. By applying Algorithm 1, one can check that this function corresponds to a perspective projection mapping $\CCC_1$ onto $\CCC_2$ from the affine point $(1,1,1)$. Furthermore, by applying Algorithm 2, one can check that the second factor of ${\mathcal N}(t,s)$ does not provide any other projection. 

Now let us perturb $\CCC_2$ in the following way:
\[
\begin{array}{ccl}
q_1(s) & = & -s^2(3s^2+2s+3)+\frac{1}{1000},\\
q_2(s) & = & -(2s^3+6s^2-3),\\
q_3(s) & = & 3s^4+4s^3+9s^2-3-\frac{1}{1000},\\
q_4(s) & = & 6s^4+s^3+3s^2+6s+3.
\end{array}
\]
For simplicity, we keep the notation $\CCC_2$ for this perturbed curve as well. Now $\CCC_2$ is not \emph{exactly} the projection of $\CCC_1$ anymore; however, it is expectable that ${\mathcal P}_{\tilde{\bfa}_i}(\CCC_1)\approx \CCC_2$ for some $\tilde{\bfa}_i$ close to $(1,1,1)$. In this case, the new polynomial ${\mathcal N}(t,s)$ is absolutely irreducible. We pick $t_1=1$, $t_2=\frac{3}{2}$, which satisfy the conditions in step 2 of Algorithm 3. The real roots of $n_1(s)={\mathcal N}(1,s)$, are 
\[-16.82354557, \mbox{ }-1.536480878, \mbox{ }-0.4983244587, \mbox{ }1.000123802.\]
Also, the real roots of $n_2(s)={\mathcal N}(3/2,s)$, are 
\[-5.921589025, \mbox{ } -1.022763869, \mbox{ } -0.5901004394, \mbox{ } 1.499976466.\]
One can check that the pair $\bfx_1(1),\bfx_2(1.000123802)$, is lucky. Similarly, the pair $\bfx_1(3/2),\bfx_1(1.499976466)$ is also lucky. Furthermore, since the points \[(1,1.000123802),\mbox{ }(3/2,1.499976466)\]are connected by a real branch of ${\mathcal N}^{\star}$, and no singularity of ${\mathcal N}^{\star}$ lies in this branch in between the points, according to Lemma \ref{thesame} the points $\bfx_1(1),\bfx_2(1.000123802)$ and $\bfx_1(3/2),\bfx_1(1.499976466)$ are related by at most one projection, if any. In order to find the eye point of that projection, we compute the lines ${\mathcal L}_1$, $\tilde{\mathcal L}_1$ connecting $\bfx_1(1),\bfx_2(1.000123802)$ and $\bfx_1(3/2),\bfx_1(1.499976466)$, respectively. One can check that these lines are skew, but the least squares method provides the solution 
\[\tilde{a}_1=(0.998393660, 0.998630969, 0.9998988141)\]for the intersection point of ${\mathcal L}_1$, $\tilde{\mathcal L}_1$. Notice that this point is really very close to the point $(1,1,1)$. The projection ${\mathcal P}_{\tilde{\bfa}_1}(\CCC_1)$ is the curve $\tilde{\CCC}_2$, parametrized by $\tilde{\bfx}_2(s)=\left(\frac{\tilde{q}_1(s)}{\tilde{q}_4(s)},\frac{\tilde{q}_2(s)}{\tilde{q}_4(s)},\frac{\tilde{q}_3(s)}{\tilde{q}_4(s)}\right)$, where
\[
\begin{array}{ccl}
\tilde{q}_1(s) & = & 2.998665906s^4+1.998529783s^3+2.998665906s^2+0.003484926s,\\
\tilde{q}_2(s) & = & 0.001030536s^4+1.998292474s^3+5.994877422s^2+0.002061072s\\
& & -2.996923443,\\
\tilde{q}_3(s) & = & -2.999696442s^4-3.996822257s^3-8.993543328s^2-0.005545998s\\
& & +2.996923443,\\
\tilde{q}_4(s) & = & -5.996923443s^4-s^3-3s^2-6s-2.996923443.
\end{array}
\]
Although $\tilde{\CCC}_2\neq \CCC_2$, one can check that the infinity norm of $q_i(t)-\tilde{q}_i(t)$ is small, suggesting that ${\mathcal P}_{\tilde{\bfa}_1}(\CCC_1)\approx \CCC_2$. More precisely, in Figure \ref{ex3} we have plotted $\CCC_2$ (in blue color) and $\tilde{\CCC_2}$ (in red color). Furthermore, we have plotted $\tilde{\CCC}_2$ over a smaller parameter interval than in the case of $\CCC_2$, so that we can appreciate the almost perfect overlapping between $\CCC_2$ and $\tilde{\CCC}_2$. Furthermore, using the technique of \cite{Rueda13}, the Hausdorff distance between $\CCC_2$ and $\tilde{\CCC}_2$ is $9.72\cdot 10^{-6}$. Therefore, if we choose $\epsilon=0.001$, which is the size of the perturbation we introduced in the second curve, we have that ${\mathcal P}_{\tilde{\bfa}_2}(\CCC_1)\approx \CCC_2$.

\begin{figure}
$$\begin{array}{c}
\includegraphics[scale=0.3]{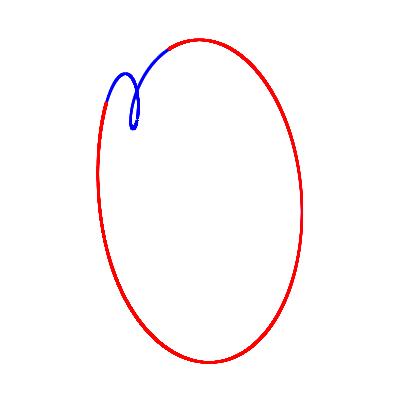}
\end{array}$$
\caption{The curves $\CCC_2$ and $\tilde{\CCC}_2$}\label{ex3}
\end{figure}

Similarly, one can check that the pairs $\bfx_1(1)$, $\bfx_2(-0.4983244587)$ and $\bfx_1(3/2)$, $\bfx_1(-0.5901004394)$ are also lucky. By proceeding as before, we get the tentative eye point 
\[\tilde{\bfa}_2=(13.96606371, 26.47971848, -6.649856672).\]The projection ${\mathcal P}_{\tilde{\bfa}_2}(\CCC_1)$ is the curve $\widehat{\CCC}_2$, parametrized by $\widehat{\bfx}_2(s)=\left(\frac{\widehat{q}_1(s)}{\widehat{q}_4(s)},\frac{\widehat{q}_2(s)}{\widehat{q}_4(s)},\frac{\widehat{q}_3(s)}{\widehat{q}_4(s)}\right)$, where
\[
\begin{array}{ccl}
\widehat{q}_1(s) & = & 25.69365991s^4+19.82986181s^3+25.69365991s^2-16.20453122s,\\
\widehat{q}_2(s) & = & -45.64322992s^4+7.316207038s^3+21.94862111s^2-91.28645982s\\
& & -33.79592552,\\
\widehat{q}_3(s) & = & 19.94957001s^4-27.14606885s^3-47.64228102s^2+107.4909911s\\
& & +33.79592552,\\
\widehat{q}_4(s) & = & -36.79592552s^4-s^3-3s^2-6s-33.79592552.
\end{array}
\]
In this case, the infinity norm of $q_i(t)-\widehat{q}_i(t)$ is not small. Furthermore, in Figure \ref{ex3-2} we have plotted $\CCC_2$ (in blue color) and $\widehat{\CCC}_2$ (in red color): one can see that both curves are very different. Furthermore, using the technique of \cite{Rueda13}, the Hausdorff distance between $\CCC_2$ and $\widehat{\CCC}_2$ is $0.253>0.001$. Therefore, ${\mathcal P}_{\tilde{\bfa}_2}(\CCC_1)\not\approx \CCC_2$. The other tentative eye points behave in a similar way.

\begin{figure}
$$\begin{array}{c}
\includegraphics[scale=0.3]{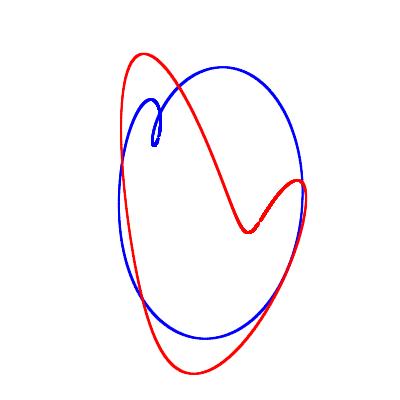}
\end{array}$$
\caption{The curves $\CCC_2$ and $\widehat{\CCC}_2$}\label{ex3-2}
\end{figure}
  
\end{example}

\section{Conclusions and Further Work.}\label{conclusion}

We have presented three algorithms, two of them symbolic and one of them approximate, to check whether or not a planar, rational curve $\CCC_2$ properly parametrized, is the (parallel or perspective) projection of another rational curve $\CCC_1$, also properly parametrized, non-necessarily planar. In the affirmative case, the algorithms compute all the projections mapping $\CCC_1$ onto $\CCC_2$. All the algorithms are based on the fact that behind each projection, we have a rational mapping between the parameter spaces of $\CCC_1$ and $\CCC_2$. The first algorithm, {\tt Algorithm 1}, uses bivariate factoring. In order to compute the projections from eye points with rational coordinates, standard bivariate factoring over the rationals suffices. However, in the most general case we need to compute an absolute factorization of a bivariate polynomial. The second algorithm, {\tt Algorithm 2}, does not use factoring, but in turn it requires, in general, to compute the primitive element of a double field extension. Except for easy, low degree cases, this operation can be costly. However, this second algorithm gives rise to a third, approximate, algorithm, {\tt Algorithm 3}. This last algorithm avoids the computation of the primitive element by changing to a floating point setting. In this last case, we compute approximations for tentative eye points, and then we evaluate whether $\CCC_2$ is ``aproximately" the projection of $\CCC_1$. In order to do this, we compute the Hausdorff distance between $\CCC_2$ and the projection of $\CCC_1$ from the tentative eye point.  

The algorithms strongly exploit the fact that $\CCC_1$ and $\CCC_2$ are rational curves. Therefore, as such they cannot be generalized to the case when $\CCC_1$ and $\CCC_2$ are implicit, not necessarily rational, algebraic curves. Hence, an interesting problem to be attacked in the future is the development of algorithms for solving the problem in the implicit case. 

\section*{References}

\end{document}